\documentclass[leqno]{cmslatex}


\setlength{\textwidth}{6in}


\usepackage{latexsym, amsfonts}
\usepackage{amssymb}
\usepackage{amsmath}




\sloppy

\thinmuskip = 0.5\thinmuskip \medmuskip = 0.5\medmuskip
\thickmuskip = 0.5\thickmuskip \arraycolsep = 0.3\arraycolsep

\def\a{a}
\def\b{b}

\def\eps{{\epsilon}}
\def\bfR{\mathbb{R}}
\def\bfN{\mathbb{N}}

\def\mcL{\mathcal{L}}
\def\mcV{\mathcal{V}}
\def\esssup{\text{ess sup}}

\newcommand{\del}{\partial} 
\newcommand{\norm}[1]{\ensuremath{\left\| #1 \right\|}}
\newcommand{\ip}[2]{\ensuremath{\left\langle #1, #2 \right\rangle}}
\newcommand{\abs}[1]{\ensuremath{\left|#1\right|}}
\renewcommand{\grad}{\nabla}

\everymath{\displaystyle}


\begin{document}

\title{Global Existence for the ``One and one-half'' dimensional relativistic Vlasov-Maxwell-Fokker-Planck system \thanks{This
work was supported by the National Science Foundation under the awards DMS-0908413 and DMS-1211667.} }

\author{Nicholas Michalowski
\thanks{Department of Mathematical Sciences, New Mexico State University, Las Cruces, New Mexico 88003, ({\tt nmichalo@nmsu.edu}).}
\and Stephen Pankavich
\thanks{Department of Applied Mathematics and Statistics, Colorado School of Mines, Golden, Colorado 80401 ({\tt pankavic@mines.edu}).} 
 }

\pagestyle{myheadings}
\markboth{N. MICHALOWSKI AND S. PANKAVICH}{1.5D RELATIVISTIC VLASOV-MAXWELL-FOKKER-PLANCK EQUATION}

\maketitle

\begin{abstract}
In a recent paper Calogero and Alcantara \cite{CalogeroRVMFP} derived a Lorentz-invariant Fokker-Planck equation, which corresponds to the evolution of a particle distribution associated with relativistic Brownian Motion.  We study the ``one and one-half'' dimensional version of this problem with nonlinear electromagnetic interactions - the relativistic Vlasov-Maxwell-Fokker-Planck system - and obtain the first results concerning well-posedness of solutions.  Specifically, we prove the global-in-time existence and uniqueness of classical solutions to the Cauchy problem and a gain in regularity of the distribution function in its momentum argument.  
\end{abstract}

\begin{keywords}
\smallskip
Kinetic Theory, Vlasov, Fokker-Planck equation, global existence

\bf{Subject classifications. 35L60, 35Q83, 82C22, 82D10}
\end{keywords}

\section{Introduction}
A plasma is a partially or completely ionized gas. Matter exists in this state if the velocities of individual particles in a material achieve magnitudes approaching the speed of light.  If a plasma is of sufficiently low density or the time scales of interest are small enough, it is deemed to be ``collisionless'', as collisions between particles become extremely infrequent.  Many examples of collisionless plasmas occur in nature, including the solar wind, the Van Allen radiations belts, and galactic nebulae.

From a mathematical perspective, the fundamental Lorentz-invariant equations which describe the time evolution of a collisionless plasma are given by the relativistic Vlasov-Maxwell system:
\begin{equation} \label{RVM} \tag{RVM} \left \{ \begin{gathered} \partial_t f + \hat{v} \cdot \nabla_x f + \left (E + \hat{v} \times
B \right ) \cdot \nabla_v f = 0 \\  \rho(t,x) = \int f(t,x,v) \ dv, \quad j(t,x)= \int \hat{v}  f(t,x,v) \ dv \\  \partial_t E = \nabla \times B - j, \qquad \nabla \cdot E = \rho \\  \partial_t B = - \nabla \times E,
\qquad \nabla \cdot B = 0. \\ \end{gathered} \right.
\end{equation}
Here, $f$ represents the distribution of (positively-charged) ions in the plasma, while $\rho$ and $j$ are the charge and current density, and $E$ and $B$ represent electric and magnetic fields generated by the charge and current.  The independent variables, $t \geq 0$ and $x,v \in \bfR^3$ represent time, position, and momentum, respectively, and physical constants, such as the charge and mass of particles, as well as, the speed of light, have been normalized to one.  The structure of the velocity terms $\hat{v}$ in (\ref{RVM}) arise due to relativistic corrections, and this quantity is defined by $$\hat{v} = \frac{v}{v_0}, \qquad v_0 = \sqrt{1 + \vert v \vert^2}.$$ 

In order to include collisions of particles with a background medium in the physical formulation, often a diffusive Fokker-Planck term is added to the Vlasov equation in (\ref{RVM}).  With this, the system is referred to as the relativistic Vlasov-Maxwell-Fokker-Planck equation.  Since basic questions of well-posedness remain unknown even in lower dimensions, we study a dimensionally-reduced version of this model for which $x \in \bfR$ and $v \in \mathbb{R}^2$, the so-called ``one and one-half dimensional'' analogue, given by
\begin{equation} \label{RVMFP} \tag{RVMFP} \left \{ \begin{gathered} \partial_t f + \hat{v}_1 \partial_x f + K \cdot \nabla_v f = \nabla_v \cdot \left ( D \nabla_v f \right ) \\  D = \frac{1}{v_0} \left [ \begin{array}{lr} 1 + v_1^2 & v_1v_2\\ v_1v_2 & 1 + v_2^2 \end{array} \right ]\\
K_1 = E_1 + \hat{v}_2 B, \qquad K_2 = E_2 - \hat{v}_1 B\\  \rho(t,x) = \int f(t,x,v) \ dv - \phi(x), \quad j(t,x)= \int \hat{v}  f(t,x,v) \ dv \\  \partial_t E_2 = -\partial_x B - j_2, \quad \partial_t B = - \partial_x E_2, \quad \partial_x E_1 = \rho, \quad \partial_t E_1 = j_1.
\end{gathered} \right.
\end{equation}
Here, we assume a single species of particles described by $f(t,x,v)$ in the presence of a given, fixed background $\phi \in C_c^1(\bfR)$ that is neutralizing in the sense that $$\int \rho(0,y) \ dy = 0.$$  The electric and magnetic fields are given by $E(t,x) = \langle E_1(t,x),E_2(t,x) \rangle$ and $B(t,x)$, respectively.  Finally,  the matrix $D = v_0^{-1} ( \mathbb{I} + v \otimes v) \in \bfR^{2 \times 2}$ is the relativistic diffusion operator and possesses some desirable properties, as discovered for its three-dimensional variant in \cite{CalogeroRVMFP}. We note, however, that the operator $\nabla_v \cdot (D \nabla_v f)$ is not uniformly elliptic and provides less dissipation than the Laplacian $\Delta_v f$.  Namely, for any $u \in \bfR^2$, $D$ satisfies
\begin{equation}
\label{D1}
v_0^{-1} \vert u \vert^2  \leq \vert u \cdot Du \vert \leq v_0 \vert u \vert^2.
\end{equation}
For initial data we take a nonnegative particle density $f^0$ with compact $x$-support and bounded moments $v_0^\b \del_x^k f^0 \in L^2(\bfR^3)$, along with fields $E^0_2, B^0 \in H^2(\bfR)$.  Additionally, we specify particular data for $E_1$, namely $$E_1(0,x) = \int_{-\infty}^x \left ( \int  f^0(y,w) \ dw - \phi(y) \right ) \ dy.$$  In fact, this particular choice of data for $E_1$ is the only one which leads to a solution possessing finite energy (see Lemma \ref{L2} and \cite{GlaSch90}). The inclusion of the neutralizing density $\phi$ is also necessary in order to arrive at finite energy solutions for (\ref{RVMFP}) with a single species of ion.

Over the past twenty-five years significant progress has been made in the analysis of (\ref{RVM}), specifically, the global existence of weak solutions (which also holds for the non-relativistic system (VM); see \cite{DPL}) and the determination of conditions which ensure global existence of classical solutions (originally discovered in \cite{GlStr}, and later in \cite{BGP} and  \cite{KlStaf} using different methods) for the Cauchy problem.  Additionally, a wide array of results have been obtained regarding electrostatic simplifications of (\ref{RVM}) - the Vlasov-Poisson and relativistic Vlasov-Poisson systems, obtained by taking the limit as $c \to \infty$ \cite{Sch86} and $B \equiv 0$, respectively.  These models do not include magnetic effects, and the electric field is given by an elliptic equation rather than a hyperbolic PDE. This simplification has led to a great deal of progress concerning the electrostatic systems, including theorems regarding the well-posedness of solutions \cite{LP, VPSSA3DGeneral, Pfaff,  SchVP}. General references on kinetic equations of plasma dynamics, such as (\ref{RVM}) and (\ref{RVMFP}), include \cite{Glassey} and \cite{VKF}.

Independent of recent advances, many of the most basic existence and regularity questions remain unsolved for (\ref{RVMFP}).   For much of the existence theory for collisionless models, one is mainly focused on bounding the velocity support of the distribution function $f$, assuming that $ f^0$ possess compact momentum support, as this condition has been shown to imply global existence \cite{GlStr}.  Hence, one of the main difficulties which arises for (\ref{RVMFP}) is the introduction of particles that are propagated with infinite momentum, stemming from the inclusion of the diffusive Fokker-Planck operator. Thus, the momentum support is necessarily unbounded and many known tools are unavailable.  Though the $v$-support of the distribution function is not bounded, we are able to overcome this issue by controlling large enough moments of the distribution to guarantee sufficient decay of $f$ in its momentum argument.  This also allows us to control the singularities which arise from representing derivatives of the fields.  As an additional difference arising from the Fokker-Planck operator, we note that when studying collisionless systems, in which $D \equiv 0$, $L^\infty$ is typically the proper space in which to estimate both the particle distribution and the fields.  With the addition of the diffusion operator, though, the natural space in which to estimate $f$ is now $L^2$.  Thus, to take advantage of the gain in regularity that should result from the Fokker-Planck term, we iterate in a weighted $L^2$ setting, estimating moments $v_0^\gamma \partial_{x,v}^k f$ in $L^2$.  Other crucial features which appear include the cone estimate, conservation of mass, and the symmetry and positivity of the diffusive operator.

Though this is the first investigation of the well-posedness of (\ref{RVMFP}), others have studied Vlasov-Maxwell models incorporating a Fokker-Planck term.  Both Yu and Yang \cite{YangYu} and Chae \cite{Chae} constructed global classical solutions to the non-relativistic Vlasov-Maxwell-Fokker-Planck system for initial data sufficiently close to Maxwellian using Kawashima estimates and the well-known energy method.  Additionally, Lai \cite{Lai, Lai2} arrived at a similar result for a one and one-half dimensional ``relativistic'' Vlasov-Maxwell-Fokker-Planck system using classical estimates. The unfortunate commonality amongst these models, however, is that they lack invariance properties. Namely, each couples the Lorentz-invariant Maxwell equations to either a Galilean-invariant Vlasov equation with non-relativistic velocities or a hybrid Vlasov equation that includes relativistic velocity corrections, but utilizes the Laplacian $\Delta_v$ as the Fokker-Planck term.  This latter term destroys the inherent Lorentz-invariance of the relativistic Vlasov-Maxwell system. Thus, we consider a diffusive operator of the form $\nabla_v \cdot (D \nabla_v f)$ which preserves this property.
With this structure in place, we can prove global existence of classical solutions under relatively relaxed assumptions:

\vspace{0.1in}

\begin{theorem}
\label{T1}
Assume the initial particle distribution satisfies $v_0^\a f^0 \in L^\infty(\bfR^3)$ and $v_0^{\b-k/2} \del_x^k f^0 \in L^2(\bfR^3)$ for some $ \a > 5,  \b > 2$, and all $k=0,1,2$.  Additionally, assume $f^0$ possesses compact support in $x$ with $E_2^0,  B^0 \in H^2(\mathbb{R})$ and $\phi \in C^1_c(\bfR)$.  Then, for any $T > 0$ there exist unique functions $f \in C^1((0,T) \times\bfR; C^2(\bfR^2)), E \in C^1((0,T) \times \mathbb{R};\bfR^2)$, and $B \in C^1((0,T) \times \mathbb{R})$ satisfying (\ref{RVMFP}) on $(0,T)$ and the Cauchy data $f(0,x,v) =  f^0(x,v)$, $E_2(0,x) = E_2^0(x)$, and $B(0,x)  =  B^0(x)$.
\end{theorem}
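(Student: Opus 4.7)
The plan is to prove global existence by combining a local well-posedness result (obtained via iteration) with \emph{a priori} estimates strong enough to extend any local solution past any finite time $T > 0$. The backbone is a weighted $L^2$ energy method that exploits the nonnegative dissipation produced by the Fokker-Planck operator, together with $L^\infty$ moment bounds for $f$ obtained from maximum-principle-type arguments on the linear drift-diffusion equation.

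First I would construct a sequence of iterates $(f^n, E^n, B^n)$. Given the fields at iterate $n$, define $f^{n+1}$ as the classical solution of the linear degenerate parabolic equation
\begin{equation*}
\partial_t f^{n+1} + \hat{v}_1 \partial_x f^{n+1} + K^n \cdot \nabla_v f^{n+1} = \nabla_v \cdot (D \nabla_v f^{n+1}), \qquad f^{n+1}(0) = f^0,
\end{equation*}
and then solve Maxwell's 1D equations to update the fields, recovering $E_1^{n+1}$ by integration of the charge (consistent with $\int \rho(0,y)\,dy = 0$), and $E_2^{n+1}$, $B^{n+1}$ via the d'Alembert-type representation along the characteristics $t\pm x = \text{const}$ with source $-j_2^{n+1}$. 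Standard estimates for the linear Fokker-Planck equation will give local existence of the iterates, and contraction in a suitable weighted $L^2$-based norm will yield a local classical solution on some $[0,T^*)$.

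The heart of the argument is the global \emph{a priori} control. I would proceed in the following order. Step one: conservation of mass and the energy identity, which by Lemma \ref{L2} (and the choice of $E_1(0,\cdot)$) gives uniform $L^2$ bounds on $E$, $B$ and on the kinetic energy $\int v_0 f\, dv\, dx$. Step two: propagation of the weighted $L^\infty$ bound $\|v_0^\alpha f\|_\infty$. Since $f$ satisfies a linear drift-diffusion equation with drift coefficients that are $L^\infty$ in $(x,v)$ once $\|E\|_\infty + \|B\|_\infty$ is controlled, a comparison-principle argument applied to $v_0^\alpha f$ (using that $v_0^\alpha$ has controlled derivatives against the degenerate operator $\nabla_v\cdot(D\nabla_v\cdot)$) yields a Gronwall-type bound. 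Step three: the weighted $L^2$ moment estimates: multiplying the equation for $\partial_x^k f$ by $v_0^{2(\beta - k/2)} \partial_x^k f$ and integrating, the Fokker-Planck term produces a nonnegative dissipation by (\ref{D1}), while the commutator terms with $v_0^{2\beta-k}$ generate at worst zeroth-order contributions that are absorbed by the kinetic energy and the moment bounds from step two. Step four: use the 1D representation formulas to bound $\|E\|_\infty, \|B\|_\infty$ and their derivatives through $k=2$ by $\|j_2\|_{L^\infty_t L^2_x}$, $\|\partial_x j_2\|$, etc., where $v$-integration by parts trades $x$-derivatives of $f$ for $v$-derivatives (which are controlled by the Fokker-Planck dissipation). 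The hypothesis $\alpha > 5, \beta > 2$ is precisely what is needed for the moments to close.

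The main obstacle is closing this chain of estimates without losing derivatives: derivatives of the fields require $v$-moments of derivatives of $f$, while derivatives of $f$ in turn depend on derivatives of the fields through the drift $K$. I would handle this by bundling all the weighted $L^2$ norms $\|v_0^{\beta-k/2}\partial_x^k f\|_{L^2}$ for $k=0,1,2$ with the $H^2$ norms of $E_2, B$ into a single quantity $\mathcal{E}(t)$ and deriving a closed differential inequality $\mathcal{E}'(t) \leq C(1+\mathcal{E}(t))^p$, using the degenerate-elliptic dissipation to absorb the higher-order terms that cannot be controlled merely by Gronwall. Once $\mathcal{E}$ is bounded on $[0,T]$, a straightforward bootstrap yields the $C^1$ regularity of $f$ in $(t,x)$ and $C^2$ in $v$ stated in the theorem. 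Uniqueness follows by subtracting two solutions, estimating the difference in the same weighted $L^2$ setting, and applying Gronwall's inequality. Continuation to any $T > 0$ then completes the proof.
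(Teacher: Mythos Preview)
Your overall architecture matches the paper's: iteration to build a local solution, maximum-principle control of $\|v_0^\a f\|_\infty$, weighted $L^2$ energy estimates on $\partial_x^k f$, and a Gronwall closure. Two places, however, are genuinely underdeveloped and are precisely where the paper spends most of its effort.

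First, your Step 4 treats the $L^\infty$ bounds on $\partial E$, $\partial B$ as a routine consequence of the d'Alembert representation plus ``$v$-integration by parts trades $x$-derivatives of $f$ for $v$-derivatives.'' That is not how the paper closes it, and a direct integration by parts in $v$ would require control of $\nabla_v f$ or $\partial_x f$ that you do not yet have at that stage. The paper instead uses the Glassey--Strauss projection (Lemma~\ref{L4}): one writes $\partial_x = (1-\hat v_1)^{-1}(T_+ - S)$ with $T_+ = \partial_t + \partial_x$ and $S = \partial_t + \hat v_1 \partial_x$, so that $T_+ f$ becomes a total $s$-derivative along the cone and $Sf$ is replaced via the Vlasov equation by $\nabla_v\cdot(D\nabla_v f) - \nabla_v\cdot(Kf)$. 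After integrating by parts in $v$ the resulting kernels are bounded by $Cv_0$, and only the moment bound $\|v_0^\a f\|_\infty$ (no derivatives of $f$) is needed. This decouples field derivatives from density derivatives and is what makes the subsequent $L^2$ estimates on $\partial_x f$, $\partial_{xx} f$ linear Gronwall inequalities rather than a coupled nonlinear system. Your proposed bundled quantity $\mathcal E(t)$ might close, but you have not shown it does, and the paper's route avoids the issue entirely.

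Second, the conclusion $f\in C^2(\bfR^2_v)$ cannot be a ``straightforward bootstrap'': the hypotheses impose \emph{no} $v$-derivatives on $f^0$, so this is a true parabolic smoothing statement for the degenerate operator $\nabla_v\cdot(D\nabla_v\,\cdot\,)$. The paper obtains it (Lemmas~\ref{dissipative}--\ref{L8}) by a hypocoercivity-style argument with time-weighted functionals of the form $\sum_k \frac{t^k}{2^k k!}\|v_0^{(4-k)/2}\nabla_v^k f\|_2^2$, carefully pairing each new $v$-derivative with the dissipation produced at the previous level. Without this, you have no mechanism to produce $H^4_v$ regularity from $H^0_v$ data, and the Sobolev embedding step fails.

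A smaller point: the cone estimate in Lemma~\ref{L2} already gives $\|E\|_\infty + \|B\|_\infty \leq C(1+t)$, not merely $L^2$ bounds; this $L^\infty$ control is what feeds into your Step~2. Also, the paper runs both the contraction for the iterates and the uniqueness argument in weighted $L^\infty$ via Lemma~\ref{L1}, not in weighted $L^2$; your $L^2$ route for uniqueness is plausible but would need to absorb the $\nabla_v f$ term arising from the difference $K_1-K_2$, which again leans on the dissipation.
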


\vspace{0.1in}

We note that a similar global existence theorem for classical solutions can be proven by adapting the methods of Lai \cite{Lai} and Degond \cite{Degond}, which rely only on $L^\infty$ estimates of the density and its derivatives.  The initial data would need to satisfy $v_0^\a  f^0 \in C^k(\bfR^3)$ for some $\a > 3$ and $k \geq 2$ with $E_2^0,  B^0 \in C^2(\bfR)$, which is more restrictive than our assumptions and requires derivatives in $v$ initially.  Since we utilize $L^2$ estimates instead, we are able to gain derivatives in $v$ for the particle distribution.  Of course, the methods we employ are also valid in the case $D = \mathbb{I}$, and hence provide an improved global existence theorem for the systems studied by Lai, Yu-Yang, and Chae, but with less regularity imposed on the initial data.  Finally, Theorem \ref{T1} can be altered slightly to accommodate friction terms which may arise within the formulation of the model.  In this case, the Maxwell equations are unchanged and the Vlasov equation undergoes very minor alterations, taking the form
$$ \partial_t f + \hat{v}_1 \partial_x f + K \cdot \nabla_v f = \nabla_v \cdot \left ( D \nabla_v f + vf \right ).$$
The new terms are lower order and have no additional effect on the results we present. Lai has already displayed this within the context of his methods \cite{Lai2}, though the additional friction term in \cite{Lai2} is $\hat{v} f$ and not $vf$. Additionally, we note that the friction term destroys the Lorentz-invariant structure of the equation.

This paper proceeds as follows.  In the next section, we will derive \emph{a priori} estimates in order to simplify the proof of the existence and uniqueness theorem.  In Section $3$, we prove the lemmas of Section $2$, and then sketch the proof of global-in-time existence and uniqueness in Section $4$. Throughout the paper the value $C > 0$ will denote a generic constant that may change from line to line.  When necessary, we will specifically identify the quantities upon which $C$ may depend.  Regarding norms, we will abuse notation and allow the reader to differentiate certain norms via context.  For instance, $\Vert f(t) \Vert_\infty = \mathop{\esssup}\limits_{x \in \bfR,v \in \bfR^2} \vert f(t,x,v) \vert$, whereas  $\Vert B(t) \Vert_\infty = \mathop{\esssup}\limits_{x \in \bfR} \vert B(t,x) \vert$, with analogous statements for $\norm{\cdot}_2$ and $\ip{\cdot}{\cdot}$ which denote the $L^2$ norm and inner product, respectively.
Finally, for derivative estimates we will use the notation 
$$\Vert v_0^\gamma \partial_x^j \grad_v^k f(t) \Vert_2 = \sum_{\vert \alpha \vert = k} \Vert v_0^\gamma \partial_x^j \partial_v^\alpha f(t) \Vert_2$$
for $\gamma \in \bfR$, $j,k = 0,1,2,...$, and a multi-index $\alpha=(\alpha_1,\alpha_2)$ where we denote
  $\del_{v_1}^{\alpha_1}\del_{v_2}^{\alpha_2}$ by $\del_v^\alpha$.

\section{ \emph{A priori} estimates}

Let $T > 0$ be given so that we may estimate on the bounded time interval $(0,T)$ when necessary. To begin, we will first prove a result that will allow us to estimate the particle density and its moments.  When studying collisionless kinetic equations, one often wishes to integrate along the Vlasov characteristics in order to derive estimates.  However, the appearance of the Fokker-Planck term changes the structure of the operator in (\ref{RVMFP}), and the values of the distribution function are not conserved along such curves.  Hence, the following lemma (similar to that of \cite{Degond}) will be utilized to estimate the particle distribution in such situations. 
\begin{lemma}
\label{L1}
Let $g \in L^1((0,T), L^\infty(\bfR^3))$, $F \in W^{1,\infty}((0,T) \times \bfR^3; \bfR^2)$,  and $h_0 \in L^\infty(\bfR^3) \cap L^2(\bfR^3)$ be given with $D \in C(\bfR^2; \bfR^{2 \times 2})$ positive semi-definite.  Assume $h(t,x,v)$ is a weak solution of
\begin{equation}
\label{LinVlasov}
\left \{ \begin{gathered} 
\partial_t  h + \hat{v}_1 \partial_x h + F(t,x,v) \cdot \nabla_v  h - \nabla_v \cdot \left ( D \nabla_v h \right)  =  g(t,x,v)\\
  h(0,x,v) =   h_0(x,v).
\end{gathered} \right.
\end{equation}
Then, for every $t \in [0,T]$ $$\norm{h(t)}_\infty \leq \norm{h_0}_\infty + \int_0^t \norm{g(s)}_\infty \ ds.$$
\end{lemma}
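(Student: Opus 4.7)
The plan is to establish this $L^\infty$ bound via a Stampacchia-type truncation, reducing the claim to a maximum principle. Set $M(t) := \norm{h_0}_\infty + \int_0^t \norm{g(s)}_\infty \, ds$ and consider the truncated function $w(t,x,v) := (h(t,x,v) - M(t))_+$. Since $w(0,\cdot,\cdot) \equiv 0$ by definition of $M(0)$, it suffices to show $w(t) \equiv 0$ for all $t \in [0,T]$, which already yields $h(t) \leq M(t)$ a.e.; applying the same argument to $-h$ (which solves (\ref{LinVlasov}) with data $-h_0$ and source $-g$) then gives $h(t) \geq -M(t)$, hence $\norm{h(t)}_\infty \leq M(t)$.

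The key calculation is an energy identity obtained by multiplying (\ref{LinVlasov}) by $w$ and integrating over $\bfR^3$. The time derivative gives $\int w \, \partial_t h = \tfrac{1}{2}\tfrac{d}{dt}\norm{w}_2^2 + M'(t)\norm{w}_1$ since $\partial_t h = \partial_t w + M'(t)$ on $\{h > M\}$. The spatial transport term $\int \hat v_1 \partial_x h \cdot w = \tfrac{1}{2}\int \hat v_1 \partial_x(w^2)$ vanishes upon integration by parts in $x$ because $\hat v_1$ is $x$-independent. The drift term integrates by parts to $-\tfrac{1}{2}\int(\nabla_v \cdot F)\,w^2$, which is bounded by $\tfrac{1}{2}\norm{\nabla_v \cdot F}_\infty \norm{w}_2^2$ thanks to $F \in W^{1,\infty}$. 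The diffusive term yields $\int \nabla_v w \cdot D \nabla_v w \geq 0$ by the positive semi-definiteness of $D$. Finally, the source term is controlled by $\norm{g(t)}_\infty \norm{w}_1$. Since $M'(t) = \norm{g(t)}_\infty$ by construction, the two $\norm{w}_1$ contributions cancel exactly, leaving
$$\frac{d}{dt}\norm{w(t)}_2^2 \;\leq\; \norm{\nabla_v \cdot F}_\infty \norm{w(t)}_2^2.$$
Grönwall's inequality and $\norm{w(0)}_2 = 0$ then force $\norm{w(t)}_2 = 0$ for all $t \in [0,T]$.

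The main obstacle is the justification of these manipulations for a weak solution $h$, because (a) $w$ need not lie in $L^2$ or $L^1$ a priori on the unbounded domain $\bfR^3$, and (b) the chain rule producing $\partial_t w$ and $\nabla_v w$ from derivatives of $h$ must be applied to a merely integrable function. To handle (a), I would instead work with the truncation $w_\eps := (h - M - \eps)_+$ for $\eps > 0$; by Chebyshev applied to the underlying $L^2$ control on $h$ (which may itself be obtained by the $p=2$ case of the same energy estimate, run first), the set $\{h > M+\eps\}$ has finite measure, guaranteeing $w_\eps \in L^1 \cap L^2$; passing $\eps \downarrow 0$ recovers the bound for $w$. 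To handle (b), one mollifies $h$ in $(x,v)$, derives the equation for the mollified function (picking up standard commutator errors involving the smooth coefficients $\hat v_1, F, D$ that vanish in the limit by DiPerna–Lions-type arguments), and then takes the nonlinear truncation of the smoothed function before passing to the limit. Once these approximation steps are in place, the algebraic cancellation in the energy inequality above closes the argument cleanly.
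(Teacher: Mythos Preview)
Your argument is correct and proceeds along the same lines as the paper's proof: both test the equation against a Stampacchia-type truncation and exploit the positive semi-definiteness of $D$ together with the $W^{1,\infty}$ bound on $F$. The paper factors the argument through an intermediate positivity lemma (nonnegative data and source imply a nonnegative solution), proved by testing against the negative part $u_-$ with an exponential weight $e^{-\lambda t}$ (where $\lambda>\tfrac12\|\nabla_v\cdot F\|_\infty$) to absorb the divergence term, and then applies it to $M(t)-h$; your version merges these two steps by testing directly against $(h-M(t))_+$ and using Gr\"onwall in place of the exponential weight, which is an equivalent packaging of the same estimate.
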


Next, we state a lemma that will allow us to control the fields and moments of the particle distribution.
\begin{lemma}[Cone Estimate and Field Bounds]
\label{L2}
Assume $v_0^\a  f^0 \in L^\infty(\bfR^3)$ for some $\a > 3$, $ f^0$ possesses compact support in $x$, and $E_2^0, B^0 \in H^1(\bfR)$. Then, for any  $t \in [0,T]$, $x\in\mathbb{R}$, we have
\begin{equation}
\label{cone}
\begin{gathered}
\int_0^t \left ( \int (v_0 \pm v_1)f(s, x \pm (t-s),v) \ dv + \frac{1}{2} \vert E_1(s, x \pm (t-s)) \vert^2 \right. \\ \qquad + \left. \frac{1}{2}\vert E_2 \pm B \vert^2 (s, x \pm (t-s))   \right ) \ ds \leq C(1+t),
\end{gathered}
\end{equation}
\begin{equation}
\label{j2}
\int_0^t \vert j_2(s, x \pm (t-s)) \vert  ds \leq C(1 + t),
\end{equation}
and
\begin{equation}
\label{fieldbounds}
\Vert E(t) \Vert_\infty +\Vert B(t) \Vert_\infty \leq C(1 + t).
\end{equation}
\end{lemma}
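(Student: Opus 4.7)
The proof follows the Glassey--Schaeffer framework for cone estimates in 1.5D RVM, adapted to accommodate the relativistic Fokker--Planck dissipation. The overall plan is to derive a local energy identity for (\ref{RVMFP}), integrate it over a backward characteristic triangle to obtain (\ref{cone}), use conservation of mass and Gauss's law to obtain the current bound (\ref{j2}), and then deduce (\ref{fieldbounds}) from (\ref{j2}) by Duhamel along null characteristics.

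Multiplying the Vlasov--FP equation by $v_0$ and integrating in $v$, the Fokker--Planck term contributes a source $+2\!\int f\, dv$ after integration by parts (using the relativistic identity $D\hat{v} = v$, which follows directly from $D = v_0^{-1}(\mathbb{I} + v\otimes v)$), while the Lorentz piece $K\cdot\nabla_v f$ produces $E\cdot j$ since $\nabla_v\cdot K = 0$.  Combined with the Maxwell Poynting identity $\partial_t\tfrac{1}{2}(|E|^2 + B^2) = -E\cdot j - \partial_x(E_2 B)$, this yields the local energy conservation law
\begin{equation*}
\partial_t e + \partial_x q = 2m, \qquad e = \!\int\! v_0 f\, dv + \tfrac{1}{2}(E_1^2 + E_2^2 + B^2), \quad q = \!\int\! v_1 f\, dv + E_2 B,
\end{equation*}
with $m = \int f\, dv$.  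Applying Green's theorem on the backward triangle $\{(s,y): 0 \leq s \leq t,\ |y-x| \leq t-s\}$, the bottom base is controlled by the initial energy, the source is bounded by $2Mt \leq C(1+t)$ via conservation of total mass $M = \int\!\int f\, dv\, dy$, and the inclined boundaries produce the integrands in (\ref{cone}) once one notes that $e \pm q = \!\int(v_0 \pm v_1) f\, dv + \tfrac{1}{2}E_1^2 + \tfrac{1}{2}(E_2 \pm B)^2 \geq 0$.

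The longitudinal field bound is immediate: from Gauss's law $\partial_x E_1 = \rho$ and the neutrality condition, $E_1(t,x) = \int_{-\infty}^x \rho(t,y)\, dy$, so $\|E_1(t)\|_\infty \leq M + \|\phi\|_1$ uniformly in $t$.  For the current estimate (\ref{j2}), Cauchy--Schwarz with $|\hat{v}_2|^2 = (1-\hat{v}_1)(1+\hat{v}_1)$ gives the pointwise bound $|j_2|^2 \leq (m-j_1)(m+j_1)$.  The mass continuity equation $\partial_t m + \partial_x j_1 = 0$ (obtained by integrating the Vlasov--FP equation in $v$, using that both the Lorentz and Fokker--Planck terms have zero $v$-divergence average) integrated over the characteristic triangle produces the uniform bounds $\int_0^t (m+j_1)(s, x+(t-s))\, ds \leq 2M$ and $\int_0^t (m-j_1)(s, x-(t-s))\, ds \leq 2M$.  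Closing (\ref{j2}) then requires combining these with the $L^2$-in-time control of $(v_0 \pm v_1) f$ provided by (\ref{cone}) via a Cauchy--Schwarz in both $v$ and $s$; this is the main technical obstacle, since mass conservation naturally couples each weighting $(m \pm j_1)$ only to its matching characteristic, whereas the bound on $|j_2|$ calls for both weights along the same characteristic, and reconciling this requires leveraging the full Lorentz-invariant energy structure.

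Given (\ref{j2}), the transverse field bounds are straightforward: the combinations $E_2 \mp B$ satisfy the decoupled transport equations $(\partial_t \pm \partial_x)(E_2 \mp B) = -j_2$, and Duhamel along null characteristics yields $(E_2 \mp B)(t,x) = (E_2 \mp B)(0, x \pm t) - \int_0^t j_2(s, x \pm (t-s))\, ds$, so (\ref{j2}) produces (\ref{fieldbounds}) after taking $L^\infty$-norms.
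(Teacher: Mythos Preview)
Your derivation of the local energy identity, the cone integration, the $E_1$ bound via Gauss's law, and the Duhamel argument for $E_2 \pm B$ all match the paper exactly. The gap is precisely where you flag it: the bound on $\int_0^t |j_2|\,ds$ along a characteristic.

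Your route via $|j_2|^2 \leq (m-j_1)(m+j_1)$ followed by Cauchy--Schwarz in $s$ does not close. Mass conservation on the triangle only controls $\int_0^t (m+j_1)(s, x+(t-s))\,ds$ and $\int_0^t (m-j_1)(s, x-(t-s))\,ds$; along a single characteristic you are missing the ``wrong-sign'' factor, and the cone estimate does not supply it either (along $x+(t-s)$ it controls $\int(v_0+v_1)f\,dv$, not $\int(v_0-v_1)f\,dv$ or $m-j_1$). Invoking ``the full Lorentz-invariant energy structure'' is not a proof.

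The paper avoids this entirely with a one-line pointwise inequality you are missing:
\[
v_0 \pm v_1 \;=\; \frac{1+v_2^2}{v_0 \mp v_1} \;\geq\; \frac{2|v_2|}{v_0 \mp v_1} \;\geq\; \frac{2|v_2|}{2v_0} \;=\; |\hat v_2|.
\]
This gives $|j_2(s, x\pm(t-s))| \leq \int |\hat v_2| f\,dv \leq \int (v_0\pm v_1) f\,dv$, and (\ref{j2}) follows \emph{immediately} from (\ref{cone}) with no Cauchy--Schwarz, no mass conservation on the triangle, and no mismatch of characteristics. Once you have this inequality, your Duhamel step for (\ref{fieldbounds}) goes through unchanged.
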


Once control of the fields is obtained, higher moments of the particle distribution function can be controlled as well.
\begin{lemma}[Estimates on moments]
\label{L3}
Let the assumptions of Lemma \ref{L2} hold. Then, for any $\gamma \in [0,\a]$ and $t \in [0,T]$
\begin{equation}
\label{densitybounds}
\Vert v_0^\gamma f(t) \Vert_\infty \leq C(1 + t)^{2\gamma}
\end{equation}
and for any $\gamma \in [0,\a-2)$ and $t \in [0,T]$
\begin{equation}
\label{rhobounds}
\norm{ \int v_0^{\gamma} f(t) \ dv }_\infty \leq C(1 + t)^{2\a}.
\end{equation}
\end{lemma}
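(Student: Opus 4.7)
The plan is to estimate $\norm{v_0^\gamma f(t)}_\infty$ by applying Lemma \ref{L1} to the weighted quantity $h := v_0^\gamma f$ and then iterating in $\gamma$. First, a routine computation using $\nabla_v v_0 = \hat{v}$ and the algebraic identity $Dv = v_0 v$ (which implies $D\hat{v} = v$) converts the Vlasov-Fokker-Planck equation for $f$ into the linear equation
\begin{equation*}
\partial_t h + \hat{v}_1 \partial_x h + (K + 2\gamma \hat{v}) \cdot \nabla_v h - \nabla_v \cdot (D \nabla_v h) = g,
\end{equation*}
where the source takes the form
\begin{equation*}
g = \gamma v_0^{\gamma-2}(K \cdot v) f - \gamma(1-\gamma) v_0^{\gamma-1} f - \gamma(\gamma+1) v_0^{\gamma-3} f.
\end{equation*}
The crucial observation is that every term in $g$ is dominated pointwise by $C(1 + |K|) v_0^{\gamma - 1} f$; in particular, it contains one fewer power of $v_0$ than $h$ does. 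Combined with the field bound $\norm{K(s)}_\infty \leq C(1+s)$ from Lemma \ref{L2}, this yields $\norm{g(s)}_\infty \leq C(1+s) \norm{v_0^{\gamma-1} f(s)}_\infty$.

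Plugging this into Lemma \ref{L1} produces the hierarchical integral inequality
\begin{equation*}
\norm{v_0^\gamma f(t)}_\infty \leq \norm{v_0^\gamma f^0}_\infty + C\int_0^t (1+s)\, \norm{v_0^{\gamma-1} f(s)}_\infty\, ds,
\end{equation*}
and the bound (\ref{densitybounds}) follows by induction on integer $\gamma \in \{0, 1, \ldots, \lfloor \a \rfloor\}$. The base case $\gamma = 0$ is immediate from Lemma \ref{L1} with trivial source, giving $\norm{f(t)}_\infty \leq \norm{f^0}_\infty$; the inductive step uses $\norm{v_0^{\gamma-1} f(s)}_\infty \leq C(1+s)^{2(\gamma-1)}$ to produce an integrand of size $(1+s)^{2\gamma-1}$ and hence the bound $C(1+t)^{2\gamma}$. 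Non-integer exponents $\gamma \in (n, n+1)$ are then filled in by the pointwise log-convexity identity $v_0^\gamma f = (v_0^{n+1} f)^{\gamma-n} (v_0^n f)^{n+1-\gamma}$, valid since $f \geq 0$, which preserves the sharp exponent.

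For (\ref{rhobounds}), once (\ref{densitybounds}) is available at $\gamma = \a$, the conclusion follows directly by factoring out the maximum weight:
\begin{equation*}
\int v_0^\gamma f(t,x,v)\, dv \leq \norm{v_0^\a f(t)}_\infty \int_{\bfR^2} v_0(v)^{\gamma-\a}\, dv.
\end{equation*}
In $\bfR^2$ the remaining weight integral converges precisely when $\gamma - \a < -2$, i.e., $\gamma < \a - 2$, matching the stated hypothesis and yielding the uniform bound $C(1+t)^{2\a}$.

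The main obstacle is ensuring that the diffusion correction genuinely produces a source of order $v_0^{\gamma-1} f$ rather than $v_0^\gamma f$. Without the structural cancellation afforded by the identity $Dv = v_0 v$, the additional first-order advection $-2\gamma \hat{v} \cdot \nabla_v h$ would fail to collapse cleanly, and the resulting inequality would close on $\norm{v_0^\gamma f(s)}_\infty$ itself; a Gr\"onwall argument would then force exponential-in-$t$ growth of the type $\exp(C(1+t)^2)$, rather than the polynomial rate $(1+t)^{2\gamma}$ claimed in the lemma.
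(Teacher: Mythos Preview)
Your proof is correct and follows essentially the same route as the paper: you multiply the Vlasov equation by $v_0^\gamma$, use the eigenvector identity $Dv=v_0v$ to reduce the commutator with the Fokker--Planck operator to the modified drift $(K+2\gamma\hat v)\cdot\nabla_v h$ plus a source of order $v_0^{\gamma-1}f$, invoke Lemma~\ref{L1}, and iterate in $\gamma$; the second conclusion is obtained identically by factoring out $\|v_0^{\a}f(t)\|_\infty$. Your source $g$ is just the paper's expression rewritten via $|v|^2=v_0^2-1$. The one genuine refinement you add is the explicit interpolation step $v_0^\gamma f=(v_0^{n+1}f)^{\gamma-n}(v_0^n f)^{n+1-\gamma}$ to recover the exact exponent $(1+t)^{2\gamma}$ at non-integer $\gamma$; the paper simply says ``by induction'' and is tacitly content with the cruder rate $(1+t)^{2\lceil\gamma\rceil}$, which is all that is actually used downstream.
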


With control on moments of the density, we may bound derivatives of the field by adapting a well-known argument \cite{GlaSch90, GlStr} that projects these derivatives onto the backward light cone.
\begin{lemma}[Estimates on field derivatives]
\label{L4}
Let the assumptions of Lemma \ref{L2} hold, and assume additionally that $E_2^0, B^0 \in H^2(\bfR)$. Then, for any $t \in [0,T]$, we have
\begin{equation}
\label{fieldderivbounds}
\Vert \partial_t E(t) \Vert_\infty + \Vert \partial_x E(t) \Vert_\infty + \Vert \partial_t B(t) \Vert_\infty + \Vert \partial_xB(t) \Vert_\infty \leq C(1 + t)^{2(\a+1)}.
\end{equation}
\end{lemma}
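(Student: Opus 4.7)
The plan is to bound the four quantities separately, exploiting the structure of the Maxwell equations. For $E_1$ the two relevant Maxwell equations are $\partial_x E_1 = \rho$ and $\partial_t E_1 = j_1$, so by Lemma \ref{L3} (with $\gamma = 0$ and $\gamma = 1$) we immediately obtain $\Vert \partial_x E_1(t)\Vert_\infty + \Vert \partial_t E_1(t)\Vert_\infty \leq C(1+t)^{2\a}$. The substantive work lies in estimating $\partial_{t,x} E_2$ and $\partial_{t,x} B$, and this I would do by the Glassey-Strauss style projection onto the backward light cone, adapted for the Fokker-Planck term.

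Using the Riemann invariants, $(\partial_t + \partial_x)(E_2+B) = -j_2$ and $(\partial_t-\partial_x)(E_2-B) = -j_2$, which integrate along the $\pm$ cones to
\begin{equation*}
(E_2 \pm B)(t,x) = (E_2^0 \pm B^0)(x \mp t) - \int_0^t j_2(s, x \mp (t-s)) \, ds.
\end{equation*}
Differentiating in $x$ or $t$ produces the data term (controlled by Sobolev embedding since $E_2^0, B^0 \in H^2(\mathbb{R})$), a $j_2$ boundary contribution (controlled by Lemma \ref{L3}), and the main term
\begin{equation*}
\int_0^t \partial_x j_2 (s, x \mp (t-s)) \, ds.
\end{equation*}
To estimate this without appealing to derivatives of $f$ that are not yet controlled, I would use the Vlasov-Fokker-Planck equation to solve for $\partial_x f$. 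Specifically, adding $\partial_x f$ to both sides of the Vlasov equation gives $(1-\hat v_1)\partial_x f = (\partial_t+\partial_x)f + K\cdot\nabla_v f - \nabla_v\cdot(D\nabla_v f)$, and so
\begin{equation*}
\partial_x j_2 = \int \frac{\hat v_2}{1-\hat v_1}\Bigl[(\partial_t+\partial_x)f + K\cdot\nabla_v f - \nabla_v\cdot(D\nabla_v f)\Bigr] dv,
\end{equation*}
with the analogous identity (using $1+\hat v_1$) along the minus cone. Along the plus cone $X^+(s) = x-(t-s)$, the first term becomes $\frac{d}{ds}f(s, X^+(s), v)$, so its time integral telescopes to boundary values. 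The second term I would treat by integration by parts in $v$; because $K_1 = E_1 + \hat v_2 B$, $K_2 = E_2 - \hat v_1 B$ satisfy $\nabla_v\cdot K = 0$, only $K\cdot \nabla_v(\hat v_2/(1-\hat v_1))$ remains. The third term I would treat by integrating by parts twice in $v$, which is legitimate because $D$ is symmetric and $C^\infty$, producing $\int f\,\nabla_v\cdot(D\nabla_v(\hat v_2/(1-\hat v_1)))\,dv$.

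The main obstacle, and the reason the argument works at all in 1.5D, is controlling the singular weight $\hat v_2/(1-\hat v_1)$ and its $v$-derivatives. A direct computation using $v_0 - v_1 = (1+v_2^2)/(v_0+v_1)$ yields the pointwise bounds
\begin{equation*}
\left|\frac{\hat v_2}{1-\hat v_1}\right| \leq 2 v_0, \qquad \left|\nabla_v\!\left(\frac{\hat v_2}{1-\hat v_1}\right)\right| \leq C v_0, \qquad \left|\nabla_v \cdot\!\left(D\nabla_v \frac{\hat v_2}{1-\hat v_1}\right)\right| \leq C v_0^2,
\end{equation*}
so that each of the three terms reduces to a moment of $f$ of order at most two, multiplied by at most one power of $(1+t)$ coming from the field bound $\Vert K(s)\Vert_\infty \leq C(1+s)$. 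Combining the boundary contribution (at most $C(1+t)^{2\a}$ by Lemma \ref{L3}) with the $K$-term (bounded by $(1+t)\int_0^t (1+s)^{2\a}\,ds$) and the diffusion term (bounded by $\int_0^t (1+s)^{2\a}\,ds$), I get $\int_0^t |\partial_x j_2|(s, X^\pm(s))\,ds \leq C(1+t)^{2\a+2}$. Adding in the data terms via the 1D Sobolev embedding $H^2 \hookrightarrow C^1$ and rearranging $\partial_{x,t} E_2$, $\partial_{x,t} B$ from the plus and minus representations yields the claimed bound $C(1+t)^{2(\a+1)}$.
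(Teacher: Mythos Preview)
Your approach is essentially the paper's: treat $E_1$ via $\partial_x E_1=\rho$, $\partial_t E_1=j_1$ and Lemma~\ref{L3}; for $E_2,B$ write the Riemann invariants, differentiate, decompose $\partial_x$ as $\frac{1}{1-\hat v_1}(T_+-S)$, and use the Vlasov--Fokker--Planck equation to replace $Sf$, producing a telescoping boundary term, a $K$-term, and a diffusion term, each handled by integration by parts in $v$. This matches the paper line for line.

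There is, however, one quantitative gap. You assert
\[
\left|\nabla_v\!\cdot\!\Bigl(D\nabla_v\tfrac{\hat v_2}{1-\hat v_1}\Bigr)\right|\le C v_0^{2},
\]
and then invoke Lemma~\ref{L3} to bound $\int v_0^{2}f\,dv$. But the second conclusion of Lemma~\ref{L3} requires $\gamma\in[0,\a-2)$, so $\gamma=2$ forces $\a>4$, whereas Lemma~\ref{L4} is stated under the hypotheses of Lemma~\ref{L2}, which only give $\a>3$. Thus your argument as written does not close under the assumed hypotheses. The paper avoids this by carrying out the computation and obtaining the sharp pointwise bound
\[
\left|\nabla_v\!\cdot\!\Bigl(D\nabla_v\tfrac{\hat v_2}{1-\hat v_1}\Bigr)\right|\le 4,
\]
so that the diffusion term reduces to $\int f\,dv$ and only $\a>2$ is needed. (Even the intermediate crude bound $O(v_0)$, which follows once one notes $v\cdot\nabla_v\phi=O(v_0)$ and hence $D\nabla_v\phi=O(v_0)$, would suffice for $\a>3$.) So the fix is simply to sharpen that single kernel estimate; everything else in your outline is correct and coincides with the paper's proof.
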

Thus, we have $C^1$ estimates on the fields without requiring any regularity of the density.  Next, we utilize energy estimates to bound the density and its derivatives in $L^2(\bfR^3)$.
\begin{lemma} \label{L5} 
Assume $f^0\in L^2(\bfR^3)$.  Then, for every $t \in [0,T]$
$$ \norm{f(t)}_2 \leq \norm{ f^0}_2.$$
If additionally, $v_0^\gamma f^0\in L^2(\bfR^3)$ for some $\gamma > 0$ and the hypotheses of Lemma \ref{L2} hold, then
$$\norm{v_0^\gamma f(t)}_2 \leq C_T $$
for every $t \in [0,T]$.
\end{lemma}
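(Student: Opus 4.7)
The plan is to carry out weighted $L^2$ energy estimates directly on the Vlasov-Fokker-Planck equation.

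For the first part, I would multiply (\ref{RVMFP}) by $f$ and integrate over $\bfR \times \bfR^2$. The transport term $\int \hat v_1 f \, \partial_x f\, dx\, dv$ vanishes after writing it as $\tfrac12\int \hat v_1 \partial_x(f^2)$ and integrating by parts in $x$ (since $\hat v_1$ is independent of $x$). For the force term, I would write $\int K\cdot \nabla_v f\, f = \tfrac12\int K\cdot \nabla_v(f^2)$ and integrate by parts in $v$. The boundary terms vanish (by compact support, or in the weak sense by a density argument/cutoff), and what survives is $-\tfrac12\int (\nabla_v\cdot K)\,f^2$. A short calculation using $\hat v_i = v_i/v_0$ gives $\partial_{v_1}(\hat v_2 B) + \partial_{v_2}(-\hat v_1 B) = B\cdot 0 = 0$, so $\nabla_v\cdot K\equiv 0$ and this term also drops out. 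For the diffusion term, integration by parts yields $-\int \nabla_v f\cdot D\nabla_v f$, which is nonpositive by the positive semi-definiteness of $D$ (see (\ref{D1})). Combining these gives $\tfrac{d}{dt}\|f(t)\|_2^2\le 0$, hence $\|f(t)\|_2\le\|f^0\|_2$.

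For the weighted estimate, I would repeat the argument after multiplying by the weight $v_0^{2\gamma} f$. The transport term again vanishes (the weight is $x$-independent). For the force term, integration by parts in $v$ now produces $-\tfrac12\int \nabla_v\cdot(v_0^{2\gamma}K)\,f^2$; since $\nabla_v\cdot K=0$, only the weight contributes, giving $-\gamma\int v_0^{2\gamma-2}(v\cdot K)\,f^2$. Using $|v\cdot K|\le v_0|K|$ and Lemma \ref{L2} to bound $\|E\|_\infty+\|B\|_\infty\le C(1+t)$, this is controlled by $C(1+t)\|v_0^\gamma f\|_2^2$. The main work is in the diffusion term. Integration by parts and expansion yield
\begin{equation*}
\int v_0^{2\gamma} f\,\nabla_v\!\cdot\!(D\nabla_v f) = -\!\int v_0^{2\gamma}\nabla_v f\cdot D\nabla_v f \; - \; 2\gamma\!\int v_0^{2\gamma-2} f\,v\cdot D\nabla_v f.
\end{equation*}
The first term is nonpositive and may be discarded (or retained as dissipation). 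For the second, the crucial algebraic identity is $v\cdot D = v_0^{-1}v\cdot(\mathbb{I}+v\otimes v) = v_0^{-1}(1+|v|^2)v = v_0 v$, so $v\cdot D\nabla_v f = v_0\, v\cdot\nabla_v f$. Writing $v\cdot\nabla_v(f^2)$ and integrating by parts once more produces only factors of the form $v_0^{2\gamma-1}$ and $v_0^{2\gamma-3}$ (using $|v|^2=v_0^2-1$ and $\nabla_v\cdot v = 2$ in $\bfR^2$), each bounded by $v_0^{2\gamma}$. This yields a lower-order contribution $\le C\|v_0^\gamma f\|_2^2$.

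Collecting everything, I obtain a differential inequality of the form
\begin{equation*}
\frac{d}{dt}\|v_0^\gamma f(t)\|_2^2 \leq C(1+t)\|v_0^\gamma f(t)\|_2^2,
\end{equation*}
and Gronwall's inequality delivers $\|v_0^\gamma f(t)\|_2\le C_T$ on $[0,T]$. The main obstacle I anticipate is bookkeeping in the diffusion-weight interaction: the Fokker-Planck operator is only degenerately elliptic, and the weight derivatives interact nontrivially with $D$, so some care is required to confirm that the residual terms after integration by parts really are of order $v_0^{2\gamma}$ rather than producing an unbounded power of $v_0$. The identity $v\cdot D = v_0 v$ is the key observation that makes this balance work, and it is precisely the Lorentz-invariant structure of $D$ that ensures the weighted estimate closes.
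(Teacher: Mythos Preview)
Your proposal is correct and follows essentially the same energy-estimate approach as the paper: multiply by $v_0^{2\gamma} f$, integrate, kill the transport term, use $\nabla_v\cdot K=0$ for the force term, and exploit the identity $Dv=v_0 v$ (equivalently $v\cdot D = v_0 v$) to show the weight--diffusion interaction produces only lower-order powers of $v_0$. The one minor difference is in how the inequality is closed: the paper keeps the sharper right-hand side $C(1+t)\|v_0^{\gamma-1/2}f(t)\|_2^2$ and proceeds by induction on $\gamma$ in half-integer steps starting from the unweighted bound, whereas you simply use $v_0^{2\gamma-1}\le v_0^{2\gamma}$ to obtain $\frac{d}{dt}\|v_0^\gamma f\|_2^2 \le C(1+t)\|v_0^\gamma f\|_2^2$ and apply Gronwall directly---a slightly more economical route that yields the same conclusion.
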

\begin{lemma} \label{L6}
Assume the hypotheses of Lemma \ref{L4} hold with $v_0^{\gamma+1} f^0, v_0^\gamma \partial_x f^0 \in L^2(\bfR^3)$ for some $\gamma \geq 0$. Then for all $t \in [0,T]$ we have 
  $$\norm{v_0^\gamma \del_x f(t)}_2 \leq C_T.$$
\end{lemma}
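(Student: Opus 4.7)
My approach is a weighted $L^2$ energy estimate for $\partial_x f$. Because $K$ and $D$ do not depend on $x$, differentiating the Vlasov equation in (\ref{RVMFP}) with respect to $x$ yields
$$\partial_t(\partial_x f) + \hat{v}_1 \partial_x(\partial_x f) + K \cdot \nabla_v(\partial_x f) - \nabla_v \cdot(D \nabla_v \partial_x f) = -\partial_x K \cdot \nabla_v f.$$
I then multiply by $v_0^{2\gamma}\partial_x f$ and integrate over $\bfR_x \times \bfR_v^2$, aiming for a Gronwall-type inequality for $\|v_0^\gamma \partial_x f(t)\|_2^2$.

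Four terms must be controlled. The transport contribution $\int v_0^{2\gamma} \hat v_1 \partial_x f \, \partial_x^2 f \, dx\, dv$ integrates to zero after integration by parts in $x$. For the Lorentz term, the key observation is that $\nabla_v \cdot K = 0$ (direct from the form of $K$ in (\ref{RVMFP})), so after integrating $\tfrac12 K \cdot \nabla_v(\partial_x f)^2$ by parts in $v$ only the contribution involving $\nabla_v(v_0^{2\gamma})$ survives; since $v \cdot K = v_1 E_1 + v_2 E_2$ (the magnetic contribution cancels), this piece is bounded by $C\|E(t)\|_\infty \|v_0^\gamma \partial_x f\|_2^2$, and $\|E\|_\infty$ is controlled by Lemma \ref{L2}. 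The diffusion term yields the dissipation $\int v_0^{2\gamma}\nabla_v\partial_x f \cdot D \nabla_v\partial_x f\,dx\,dv \geq \int v_0^{2\gamma-1}|\nabla_v\partial_x f|^2\,dx\,dv$ via (\ref{D1}), plus a lower-order cross term from $\nabla_v(v_0^{2\gamma}) \cdot D\nabla_v\partial_x f$ that, using $v \cdot D = v_0 v$, reduces after another integration by parts in $v$ to something dominated by $\|v_0^\gamma \partial_x f\|_2^2$.

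The principal obstacle is the source term $-\int v_0^{2\gamma}(\partial_x K \cdot \nabla_v f)\partial_x f\,dx\,dv$, which seemingly demands control of the as-yet unestimated quantity $\nabla_v f$. I handle this by integrating by parts in $v$ once more, shifting $\nabla_v$ off $f$:
$$-\int v_0^{2\gamma}(\partial_x K \cdot \nabla_v f)\,\partial_x f \, dx\,dv = 2\gamma\int v_0^{2\gamma-2}(v \cdot \partial_x K)\,(\partial_x f)\, f\, dx\,dv + \int v_0^{2\gamma}(\partial_x K \cdot \nabla_v \partial_x f)\, f\, dx\,dv,$$
where again $\nabla_v \cdot \partial_x K = 0$. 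Applying Young's inequality with small parameter to the second term absorbs a piece of $\int v_0^{2\gamma-1}|\nabla_v \partial_x f|^2$ into the diffusion dissipation and leaves a remainder bounded by $C\|\partial_x K\|_\infty^2 \|v_0^{\gamma+1/2}f\|_2^2$. By Lemma \ref{L4}, $\|\partial_x K\|_\infty \leq C(1+t)^{2(\a+1)}$, and by Lemma \ref{L5} (invoked with the weight $\gamma+1$ supplied by the hypothesis $v_0^{\gamma+1}f^0 \in L^2$), $\|v_0^{\gamma+1/2}f(t)\|_2 \leq C_T$. The first piece is estimated analogously by $C\|\partial_x K\|_\infty \|v_0^\gamma \partial_x f\|_2 \|v_0^{\gamma-1}f\|_2 \leq C_T(1+t)^{2(\a+1)}\|v_0^\gamma \partial_x f\|_2$.

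Collecting all contributions produces a differential inequality of the form
$$\frac{d}{dt}\|v_0^\gamma \partial_x f(t)\|_2^2 \leq C_T(1+t)^{4(\a+1)}\bigl(1 + \|v_0^\gamma \partial_x f(t)\|_2^2\bigr),$$
and Gronwall's inequality, combined with the initial bound $v_0^\gamma \partial_x f^0 \in L^2$, finishes the proof.
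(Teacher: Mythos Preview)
Your proof is correct and follows essentially the same approach as the paper: differentiate the Vlasov equation in $x$, multiply by $v_0^{2\gamma}\partial_x f$, integrate, exploit $\nabla_v\cdot K=0$, move the $\nabla_v f$ in the source term onto the other factors by integration by parts, and absorb the resulting $\nabla_v\partial_x f$ contribution into the diffusion dissipation via Young's inequality with a small parameter. The only cosmetic difference is that the paper organizes the closure as an induction in $\gamma$ (first $\gamma=0$, then stepping in the weight), whereas you bound $\|v_0^{\gamma-1/2}\partial_x f\|_2 \le \|v_0^\gamma\partial_x f\|_2$ and apply Gronwall directly; both routes are valid here.
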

\begin{lemma}  \label{L7}
Assume the hypotheses of Lemma \ref{L4} hold with $v_0^\a f^0 \in L^\infty(\bfR^3)$ and $v_0^{\b-k/2} \del_x^k f^0\in L^2(\bfR^3)$ for some $\a > 5$, $\b > 2$ and any $k =0,1,2$.
Then, for all $t \in [0,T]$
 $$ \Vert v_0^\gamma \partial_{xx} f(t) \Vert_2 + \sum_{k=0}^2 \left ( \Vert \partial^k E(t) \Vert_2 + \Vert \partial^k B(t) \Vert_2  \right ) \leq C_T$$
for every $\gamma \in [0, c]$,  where $c= \min \left \{\frac{\a - 3}{2}, \b-1 \right \}$ and $\partial^k$ is any $t$ or $x$ derivative of order $k$.
\end{lemma}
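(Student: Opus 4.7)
The proof couples $L^2$ energy estimates for the three wave-type Maxwell equations (differentiated up to twice in $x$) with a weighted $L^2$ energy estimate for $v_0^\gamma \del_{xx} f$, closing the resulting differential inequality via Gronwall. The dissipation from the Fokker--Planck operator is crucial both to recover the lost $v$-derivatives and to absorb the worst commutator term produced by the highest-order field derivatives.

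\textbf{Steps for the fields.} For $k=0$ the bounds $\|E(t)\|_2 + \|B(t)\|_2 \le C_T$ follow at once from the pointwise bounds in Lemma~\ref{L2} combined with finite-speed propagation: the fields are supported in the $t$-neighborhood of $\operatorname{supp} f^0 \cup \operatorname{supp} \phi$. For $k=1,2$, I would differentiate $\partial_t E_2 = -\partial_x B - j_2$ and $\partial_t B = -\partial_x E_2$ the appropriate number of times, multiply the first by $\del_x^k E_2$ and the second by $\del_x^k B$, and add, obtaining
\begin{equation*}
\tfrac{d}{dt}\bigl(\|\del_x^k E_2\|_2^2 + \|\del_x^k B\|_2^2\bigr) = -2\int \del_x^k j_2\,\del_x^k E_2\,dx,
\end{equation*}
with $\del_x^k E_1$ controlled via $\del_x^{k+1} E_1 = \del_x^k\rho$ and $\del_t E_1 = j_1$. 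A Cauchy--Schwarz estimate in $v$ gives $\|\del_x^k j\|_2 + \|\del_x^k\rho\|_2 \le C\|v_0^\gamma \del_x^k f\|_2$ for any $\gamma > 1$, so Lemmas~\ref{L5}--\ref{L6} close the estimates for $k=0,1$, and the $k=2$ estimate couples to the $\del_{xx} f$ estimate below.

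\textbf{Estimate on $v_0^\gamma \del_{xx} f$.} Apply $\del_{xx}$ to the Vlasov equation, multiply by $v_0^{2\gamma} \del_{xx} f$ and integrate over $\bfR\times\bfR^2$. The transport term $\hat v_1 \del_x \del_{xx} f$ integrates to zero, and $K\cdot\nabla_v \del_{xx} f$ integrates to a term involving $\nabla_v\cdot K = 0$ plus a harmless $v_0^{-1}$ factor from $\nabla_v v_0^{2\gamma}$, controlled by the $L^\infty$ field bounds of Lemma~\ref{L4}. The diffusion term produces, after integrating by parts, the dissipative contribution
\begin{equation*}
-\int v_0^{2\gamma}\nabla_v\del_{xx}f\cdot D\nabla_v\del_{xx}f\,dx\,dv \le -\int v_0^{2\gamma - 1}|\nabla_v\del_{xx} f|^2\,dx\,dv
\end{equation*}
by the lower bound in (\ref{D1}), plus lower-order weight terms absorbed using Lemma~\ref{L6} and the moment bounds of Lemma~\ref{L3}. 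The two commutator terms
\begin{equation*}
2\int v_0^{2\gamma} \del_{xx} f\, \del_x K\cdot\nabla_v\del_x f \,dx\,dv + \int v_0^{2\gamma} \del_{xx} f\,\del_{xx} K\cdot\nabla_v f\,dx\,dv
\end{equation*}
are handled as follows: $\del_x K$ is bounded in $L^\infty$ by Lemma~\ref{L4}, so the first is $\le C_T(\|v_0^\gamma \del_{xx} f\|_2^2 + \|v_0^\gamma \nabla_v\del_x f\|_2^2)$, with the $\nabla_v$-piece recovered from an intermediate energy estimate in which the Fokker--Planck dissipation bounds $\|v_0^{\gamma-1/2}\nabla_v\del_x f\|_2^2$ by data. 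For the second (the delicate term), I would integrate by parts in $v$ using $\nabla_v\cdot \del_{xx} K = 0$ to move $\nabla_v$ onto $v_0^{2\gamma}\del_{xx} f$, then apply Young's inequality:
\begin{equation*}
\Bigl|\int f\,\del_{xx} K\cdot\nabla_v(v_0^{2\gamma}\del_{xx} f)\,dx\,dv\Bigr| \le \varepsilon \int v_0^{2\gamma-1}|\nabla_v\del_{xx} f|^2 + \tfrac{C}{\varepsilon}\bigl(\|\del_{xx} E\|_2^2 + \|\del_{xx} B\|_2^2\bigr) \|v_0^{2\gamma+1}f^2\|_\infty^{1/2}\|f\|_2\cdots
\end{equation*}
(up to clean constants), where the first term is absorbed into the Fokker--Planck dissipation and the second, crucially, uses the $L^\infty$ moment bound of Lemma~\ref{L3} which is finite precisely when $2\gamma + 1 \le \a$, i.e. $\gamma \le (\a-3)/2$. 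The initial-data constraint $\gamma \le \b - 1$ comes from requiring $v_0^\gamma \del_{xx} f^0 \in L^2$ under the hypothesis $v_0^{\b-1}\del_{xx} f^0 \in L^2$.

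\textbf{Closure and main obstacle.} Setting $M(t) = \|v_0^\gamma\del_{xx} f(t)\|_2^2 + \sum_{k\le 2}(\|\del^k E(t)\|_2^2 + \|\del^k B(t)\|_2^2)$, the steps above give $M'(t) \le C_T(1 + M(t))$ and Gronwall concludes. The principal difficulty is the cyclic dependence between $\|v_0^\gamma\del_{xx} f\|_2$ and $\|\del_{xx} E\|_2 + \|\del_{xx} B\|_2$ through the commutator $\del_{xx} K\cdot \nabla_v f$; controlling it requires that the Fokker--Planck dissipation be strong enough to absorb a $\nabla_v\del_{xx}f$-term while leaving behind a coefficient depending on high-order $L^\infty$ moments of $f$. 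This is exactly what dictates the sharp upper bound $\gamma\le c = \min\{(\a-3)/2,\,\b - 1\}$.
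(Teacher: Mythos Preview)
Your proposal is correct and follows essentially the same approach as the paper: couple the weighted $L^2$ energy estimate for $v_0^\gamma\partial_{xx}f$ (with the $\partial_{xx}K$ commutator handled by integrating by parts in $v$, absorbing the resulting $\nabla_v\partial_{xx}f$ term into the Fokker--Planck dissipation, and invoking the $L^\infty$ moment bound of Lemma~\ref{L3} on $\int v_0^{2\gamma+1}f\,dv$) with the standard wave-energy estimates for $\partial_x^k E_2,\partial_x^k B$, then close by Gronwall on the combined quantity $\mathcal{G}(t)+\|v_0^\gamma\partial_{xx}f(t)\|_2^2$. One small correction: the relevant constraint from the second part of Lemma~\ref{L3} is $2\gamma+1<\a-2$ (needed so that $\sup_x\int v_0^{2\gamma+1}f\,dv<\infty$), not $2\gamma+1\le\a$ as you wrote---though your stated conclusion $\gamma\le(\a-3)/2$ is the correct one.
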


Next, we derive dissipative inequalities for lower-order derivatives of the density.  Ultimately, these will be used to prove the gain in regularity achieved by Lemma \ref{L8}.
\begin{lemma}[Low-order Dissipation]
\label{dissipative}
Assume the hypotheses of Lemma \ref{L7} hold.  
Then, for all $t \in (0,T)$, we have the following 
$$\frac{d}{dt} \Vert v_0^2 f(t) \Vert_2^2 \leq  C_T \Vert v_0^2 f(t) \Vert_2^2 - \Vert v_0^{3/2} \grad_v f(t) \Vert_2^2$$
$$\frac{d}{dt} \Vert v_0^{3/2} \grad_v f(t) \Vert_2^2 \leq C_T\left ( \Vert v_0^{3/2} \grad_v f(t) \Vert_2^2 + \Vert v_0 \partial_x f(t) \Vert_2^2 \right )- \Vert v_0 \grad^2_v f(t) \Vert_2^2$$
$$\frac{d}{dt} \Vert v_0^{3/2} \partial_x f(t) \Vert_2^2 \leq C_T \left (\Vert v_0^{3/2} \partial_x f(t) \Vert_2^2 + \Vert v_0^2 f(t) \Vert_2^2 \right ) - (1-\eps) \Vert v_0 \grad_v \partial_x f(t) \Vert_2^2$$
\begin{eqnarray*}
\frac{d}{dt} \Vert v_0 \grad_v \partial_x f(t) \Vert_2^2 & \leq & C_T \left ( \Vert v_0 \grad_v \partial_x f(t) \Vert_2^2 + \Vert \partial_{xx} f(t) \Vert_2^2 +  \Vert v_0^{3/2} \grad_v f(t) \Vert_2^2  \right ) \\
& \ & \ - (1-\eps) \Vert v_0^{1/2} \grad^2_v \partial_x f(t) \Vert_2^2.
\end{eqnarray*}
\end{lemma}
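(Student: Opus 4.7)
The plan is to derive each of the four inequalities by applying an appropriate differential operator ($\mathrm{Id}$, $\nabla_v$, $\partial_x$, or $\nabla_v\partial_x$) to the Vlasov equation in (\ref{RVMFP}), multiplying the resulting equation by $v_0^{2\gamma}$ times the same derivative of $f$ for the matching weight $\gamma$, and integrating over $(x,v)\in \bfR\times\bfR^2$. Three structural facts drive the argument. First, the $x$-transport coefficient $\hat{v}_1$ is $x$-independent, so $\int \hat{v}_1 u \,\partial_x u\, v_0^{2\gamma}\,dx\,dv = 0$ after integrating by parts in $x$. Second, $\nabla_v\cdot K = 0$ by direct computation, since $\partial_{v_1}(\hat{v}_2 B)=-Bv_1v_2/v_0^3$ and $\partial_{v_2}(-\hat{v}_1 B)=Bv_1v_2/v_0^3$ cancel. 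Third, the ellipticity bound $u\cdot Du \geq v_0^{-1}|u|^2$ from (\ref{D1}) produces the claimed dissipative contribution once the diffusion is integrated by parts.

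The first inequality is the cleanest. Testing (\ref{RVMFP}) against $v_0^4 f$, the transport term vanishes, the force term $\int K\cdot\nabla_v f\cdot v_0^4 f$ reduces via $\nabla_v\cdot K = 0$ to a weighted integral of $(v\cdot K)f^2$ bounded by $C_T\|v_0^2 f\|_2^2$ using $\|K\|_\infty\leq C_T$ from Lemma \ref{L2}, and the diffusion produces $-\|v_0^{3/2}\nabla_v f\|_2^2$ plus lower-order correction integrals $\int v_0^{2\gamma-1}f^2$ that are trivially dominated by $\|v_0^2 f\|_2^2$.

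For the remaining three inequalities I would first apply $\partial_{v_i}$, $\partial_x$, or $\partial_{v_i}\partial_x$ to (\ref{RVMFP}) and test against $v_0^{2\gamma}$ times the same derivative of $f$, keeping careful track of the commutators generated: $(\partial_{v_i}\hat{v}_1)\partial_x f$, $(\partial_{v_i}K)\cdot\nabla_v f$, $(\partial_x K)\cdot\nabla_v f$, their higher-order analogs, and the diffusion commutators $\nabla_v\cdot((\partial_{v_i}D)\nabla_v f)$ and $\nabla_v\cdot((\partial^2_v D)\nabla_v f)$. Elementary computations give $|\partial_{v_i}\hat{v}_1|,|\partial_{v_i}D|\leq Cv_0^{-1}$ and $|\partial_{v_i}K|\leq C(1+t)v_0^{-1}$, while $\|\partial_x K\|_\infty \leq C_T$ follows from $\partial_x E_1 = \rho$, Lemma \ref{L3}, and Lemma \ref{L4}. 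Pairing each commutator with $v_0^{2\gamma}$ times a derivative of $f$ and applying weighted Cauchy-Schwarz, one checks that the specific weights $\gamma=3/2$ in the second and third inequalities and $\gamma=1$ in the fourth are chosen precisely so that the resulting products fit into the stated right-hand sides.

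The main subtlety, responsible for the factor $(1-\eps)$ in the last two inequalities, arises from commutator terms in which the ``wrong'' factor carries the $\nabla_v$; the model example, in the third inequality, is $\int v_0^3(\partial_x f)(\partial_x K)\cdot\nabla_v f$. I would integrate this by parts in $v$, moving $\nabla_v$ off $f$ onto $v_0^3(\partial_x K)(\partial_x f)$ and using $\nabla_v\cdot\partial_x K = 0$, to produce a term of the form $\int v_0^3 f(\partial_x K)\cdot\nabla_v\partial_x f$; Young's inequality then splits this as $\eps\|v_0\nabla_v\partial_x f\|_2^2 + C_\eps C_T\|v_0^2 f\|_2^2$, with the first piece absorbed into the dissipation. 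The analogous manoeuvre handles the fourth inequality. The principal obstacle is therefore not any individual estimate but the bookkeeping: one must verify that every commutator fits with the weights allotted on the right-hand side of the correct inequality, and that the hierarchical ordering of the four inequalities is consistent, so that each dissipative term produced is strictly stronger than any norm of the same derivative appearing on the right, which is exactly what will allow them to be combined in Lemma \ref{L8}.
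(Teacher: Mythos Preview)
Your outline tracks the paper's argument closely, but there is one concrete error that would cause the second and fourth inequalities to fail as written. The claim $|\partial_{v_i}D|\leq Cv_0^{-1}$ is false: for instance $\partial_{v_1}D_{11}=v_1v_0^{-3}(1+v_1^2+2v_2^2)$, which is only $O(1)$ for large $|v|$. With the correct bound $|\partial_v D|\leq C$, the diffusion commutator $\nabla_v\cdot((\partial_{v_i}D)\nabla_v f)$ cannot be handled by a single integration by parts followed by weighted Cauchy--Schwarz. After one integration by parts the critical piece is $\int v_0^{2\gamma}\,\nabla_v\partial_{v_i}f\cdot(\partial_{v_i}D)\nabla_v f$, and with $\gamma=3/2$ (second inequality) or $\gamma=1$ (fourth inequality, with $\partial_x f$ in place of $f$) no splitting of the weight simultaneously produces a term absorbable into the dissipation $\|v_0^{\gamma-1/2}\nabla_v^2(\cdot)\|_2^2$ and a remainder dominated by $\|v_0^{\gamma}\nabla_v(\cdot)\|_2^2$; one factor always carries half a power of $v_0$ too many.

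The fix, which is the one step your sketch is missing, is a second integration by parts exploiting the symmetry of $\partial_{v_i}D$: since
\[
\nabla_v\partial_{v_i}f\cdot(\partial_{v_i}D)\nabla_v f
=\tfrac{1}{2}\,\partial_{v_i}\bigl(\nabla_v f\cdot(\partial_{v_i}D)\nabla_v f\bigr)
-\tfrac{1}{2}\,\nabla_v f\cdot(\partial_{v_i}^2 D)\nabla_v f,
\]
the first term is a perfect $v_i$-derivative and after integrating by parts lands on $C\|v_0^{\gamma-1/2}\nabla_v f\|_2^2$, while the second is genuinely lower order because $|\partial_v^2 D|\leq Cv_0^{-1}$ (this decay \emph{is} correct at second order). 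This is also why the second inequality carries the clean coefficient $-1$ rather than $-(1-\eps)$: no absorption into the dissipation is needed for that term. Your treatment of the transport, the force terms, the $\partial_x K$ commutators, and the origin of the $(1-\eps)$ in the third and fourth inequalities is otherwise correct and matches the paper.
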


The next lemma contains dissipative inequalities for higher-order derivatives of the density.  In particular, it will allow us to trade $v$-derivatives of the density for those which are two orders less with the associated penalty of an $x$-derivative and a $v_0$ moment.  For instance, use of this lemma will allow us to conclude the estimates
$$\Vert \grad^4_v f(t) \Vert_2^2 \lesssim \Vert v_0^{1/2} \grad^2_v \partial_x f(t) \Vert_2^2 \lesssim \Vert \partial_{xx} f(t) \Vert_2^2 \leq C_T$$
along with the previously obtained bound on the second spatial derivative.

\begin{lemma}[High-order dissipation]
\label{v_deriv_lemma}
Assume the hypotheses of Lemma \ref{L7} hold.  
Then, for all $t \in (0,T)$, we have
\begin{eqnarray*} 
\frac{d}{dt} \norm{v_0^\gamma \grad_v^k f (t)}_2^2 & \leq &
  C_T\biggl(\norm{v_0^\gamma \grad_v^k f (t)}_2^2 +
  \sum_{j=1}^{k-1} \norm{v_0^{\gamma+j-k} \grad_v^j f (t)}_2^2
  + \norm{v_0^{\gamma+1/2}\del_x\grad_v^{k-2} f(t)}_2^2\Big) \\
  & \ & \   - (1- \eps) \norm{v_0^{\gamma -1/2} \grad_v^{k+1} f (t)}_2^2 
\end{eqnarray*}
for every $\gamma \in [0, \b -k/2]$, $k=2, 3, 4$, and $\eps > 0$ sufficiently small.
Additionally, we have 
\begin{eqnarray*}
\frac{d}{dt} \Vert v_0^{1/2} \grad_v^2 \partial_x f(t) \Vert_2^2 & \leq & C_T \biggl(\Vert v_0^{1/2} \grad_v^2 \partial_x f(t) \Vert_2^2
+ \Vert v_0 \grad_v \partial_x f(t) \Vert_2^2
+  \sum_{j=1}^2 \norm{v_0^{2 - j/2} \grad_v^j f (t)}_2^2 \\
& \ &  + \Vert \partial_{xx} f(t) \Vert_2^2  \biggr)- (1- \eps) \norm{\grad_v^3 \partial_x f (t)}_2^2 
\end{eqnarray*}
for all $t \in (0,T)$ and $\eps > 0$ sufficiently small.
\end{lemma}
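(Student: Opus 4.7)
The plan is to perform a weighted $L^2$ energy estimate on $\partial_v^\alpha f$ for multi-indices $|\alpha|=k$ at weight $v_0^{2\gamma}$, in the spirit of Lemma \ref{dissipative} but at higher $v$-differentiation order. I would begin by differentiating the Vlasov equation in (\ref{RVMFP}) by $\partial_v^\alpha$, obtaining
$$\partial_t(\partial_v^\alpha f) + \hat{v}_1\partial_x\partial_v^\alpha f + K\cdot\nabla_v\partial_v^\alpha f - \nabla_v\cdot(D\nabla_v\partial_v^\alpha f) = \mathcal{R}_\alpha,$$
where $\mathcal{R}_\alpha$ collects the commutator contributions from streaming, Lorentz force, and diffusion operators. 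Multiplying by $v_0^{2\gamma}\partial_v^\alpha f$ and integrating over $\bfR\times\bfR^2$ yields $\frac{1}{2}\frac{d}{dt}\|v_0^\gamma\partial_v^\alpha f\|_2^2$ on the left-hand side, and summing over $|\alpha|=k$ produces the norm on the left of the claim.

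Among the principal terms, the streaming contribution vanishes after integration by parts in $x$; the Lorentz force term contributes at most $C_T\|v_0^\gamma\partial_v^\alpha f\|_2^2$ via the identity $\nabla_v\cdot K=0$ (easily checked from $K_1 = E_1+\hat{v}_2 B$, $K_2 = E_2-\hat{v}_1 B$) together with the field bound (\ref{fieldbounds}); and the diffusion term, after integration by parts and use of the ellipticity bound $\xi\cdot D\xi\geq v_0^{-1}|\xi|^2$ from (\ref{D1}), yields the dissipation $-\|v_0^{\gamma-1/2}\nabla_v\partial_v^\alpha f\|_2^2$ plus a residue arising from differentiating the weight $v_0^{2\gamma}$. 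This residue is controlled using the algebraic identities $Dv=v_0 v$ and $\sum_i v_i\partial_{v_l}D_{ij}=(v_0-v_0^{-1})\delta_{lj}$, both of which follow from the explicit form of $D$, and contributes at most $C_T\|v_0^\gamma\partial_v^\alpha f\|_2^2$.

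The main obstacle is the diffusion commutator $[\partial_v^\alpha,\nabla_v\cdot(D\nabla_v)]f$, which by the Leibniz rule expands into a sum over $0<|\beta|\leq k$ of terms of the form $(\partial_v^\beta D_{ij})\partial_i\partial_j\partial_v^{\alpha-\beta}f$ and $\partial_i(\partial_v^\beta D_{ij})\partial_j\partial_v^{\alpha-\beta}f$. Direct computation gives $|\partial_v D|=O(1)$ and $|\partial_v^m D|=O(v_0^{1-m})$ for $m\geq 2$, so every term with $|\beta|\geq 2$, paired against $v_0^{2\gamma}\partial_v^\alpha f$ and split by Cauchy--Schwarz, is dominated by the dissipation plus contributions in $\sum_{j=1}^{k-1}\|v_0^{\gamma+j-k}\nabla_v^j f\|_2^2$. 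The delicate case is $|\beta|=1$: after one integration by parts a cross term of the form $\int v_0^{2\gamma}(\partial_j\partial_v^\alpha f)(\partial_{v_l}D_{ij})(\partial_i\partial_v^{\alpha-e_l}f)$ remains, which when split by Young's inequality at weights $v_0^{\gamma-1/2}$ and $v_0^{\gamma+1/2}$ absorbs part into the dissipation but leaves a back-pressure $\sim\|v_0^{\gamma+1/2}\nabla_v^k f\|_2^2$. To replace this by the admissible term $\|v_0^{\gamma+1/2}\partial_x\nabla_v^{k-2}f\|_2^2$, I would invoke the Vlasov--Fokker--Planck equation itself, writing $D_{ij}\partial_i\partial_j f = \partial_t f + \hat{v}_1\partial_x f + K\cdot\nabla_v f - (\partial_i D_{ij})\partial_j f$ and applying $\nabla_v^{k-2}$; combined with the structural identities above and the standard lower-order commutators, this trades the bad second-order $v$-derivative in the back-pressure for an $x$-derivative at precisely the weight $v_0^{\gamma+1/2}$, while the time derivative that appears is absorbed back into the evolution of $\|v_0^\gamma\nabla_v^k f\|_2^2$.

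Finally, the streaming commutator $[\partial_v^\alpha,\hat{v}_1\partial_x]$ and force commutator $[\partial_v^\alpha,K\cdot\nabla_v]$ each produce lower-order contributions with coefficients bounded by $v_0^{-|\beta|}$ (from derivatives of $\hat{v}$) times $1$ or $B$, respectively; most of these drop directly into the admissible sum $\sum_{j=1}^{k-1}\|v_0^{\gamma+j-k}\nabla_v^j f\|_2^2$, and the highest-order streaming piece involving $\partial_x\nabla_v^{k-1}f$ is reduced to $\|v_0^{\gamma+1/2}\partial_x\nabla_v^{k-2}f\|_2^2$ (or weaker) by an additional integration by parts in $v$ that trades a $v$-derivative for a power of the weight. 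The proof of the mixed-derivative estimate for $\|v_0^{1/2}\nabla_v^2\partial_x f\|_2^2$ follows precisely the same scheme applied to $\partial_x\partial_v^\alpha$ with $|\alpha|=2$; since $D$ is independent of $x$, the diffusion commutator is identical in structure to the $k=2$ case, and the additional $\partial_x$ appearing in the streaming/force commutators generates the $\|\partial_{xx}f\|_2^2$ contribution in the upper bound.
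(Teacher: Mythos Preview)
Your overall scaffolding (differentiate, multiply by $v_0^{2\gamma}\partial_v^\alpha f$, integrate, split into principal plus commutator pieces) matches the paper, and your handling of the principal terms and of the streaming commutator---integrating one $\partial_{v_i}$ off of $\partial_x\partial_v^{\alpha'}f$ to land on $\|v_0^{\gamma+1/2}\partial_x\nabla_v^{k-2}f\|_2^2$ at the cost of an $\eps$-piece absorbed by the dissipation---is exactly what the paper does.

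The gap is in your treatment of the $|\beta|=1$ diffusion commutator. You are right that after one integration by parts the surviving cross term is
\[
-\int v_0^{2\gamma}\,(\partial_{v_l}D_{ij})\,(\partial_j\partial_v^{\alpha'}f)\,(\partial_i\partial_v^{\alpha}f)\,dv\,dx,
\qquad \alpha=\alpha'+e_l,
\]
and that a naive Young split produces the inadmissible back-pressure $\|v_0^{\gamma+1/2}\nabla_v^k f\|_2^2$. But your proposed cure---substituting the equation $D_{ij}\partial_i\partial_j f=\partial_t f+\hat v_1\partial_x f+K\cdot\nabla_v f-(\partial_iD_{ij})\partial_j f$ and applying $\nabla_v^{k-2}$---does not close as written: the coefficient in the bad term is $\partial_{v_l}D_{ij}$, not $D_{ij}$, so the substitution does not apply directly; and even if it did, the resulting $\partial_t\nabla_v^{k-2}f$ contribution cannot be ``absorbed back into the evolution of $\|v_0^\gamma\nabla_v^k f\|_2^2$'' in any standard sense, since it enters as a squared $L^2$ norm with the wrong weight rather than as a pairing with $\partial_v^\alpha f$.

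The paper avoids this altogether by exploiting the \emph{symmetry} of $D$. Since $\partial_{v_l}D_{ij}$ is symmetric in $i,j$ and $\partial_i\partial_v^\alpha f=\partial_l(\partial_i\partial_v^{\alpha'}f)$, the cross term is exactly
\[
-\tfrac12\int v_0^{2\gamma}\,(\partial_{v_l}D_{ij})\,\partial_{v_l}\!\bigl[(\partial_i\partial_v^{\alpha'}f)(\partial_j\partial_v^{\alpha'}f)\bigr]\,dv\,dx,
\]
and a single integration by parts in $v_l$ leaves only $\int \partial_{v_l}\!\bigl[v_0^{2\gamma}\partial_{v_l}D_{ij}\bigr](\partial_i\partial_v^{\alpha'}f)(\partial_j\partial_v^{\alpha'}f)$; since $\partial_{v_l}[v_0^{2\gamma}\partial_{v_l}D_{ij}]=O(v_0^{2\gamma-1})$, this is bounded by $C\|v_0^{\gamma-1/2}\nabla_v^k f\|_2^2\le C\|v_0^\gamma\nabla_v^k f\|_2^2$. (This is the same trick used in the proof of Lemma~\ref{dissipative}, where the corresponding term is written as $\frac12\int v_0^3 R^0(v)\,\partial_v|\nabla_v f|^2$.) Thus the $|\beta|=1$ diffusion commutator is in fact benign and contributes no $\partial_x$ term; the $\|v_0^{\gamma+1/2}\partial_x\nabla_v^{k-2}f\|_2^2$ in the statement arises \emph{solely} from the streaming commutator, which you already handle correctly.
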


%

Our final lemma removes the need for regularity of the initial density in $v$ in order to obtain derivative bounds.  Hence, solutions achieve a gain in regularity where $f$ and $\partial_x f$ are smooth in $v$ even for initial data which are not.
\begin{lemma}  \label{L8}
Assume the hypotheses of Lemma \ref{L7} hold.
Then for all $t \in (0,T)$,
$$\sum_{k=0}^4 \frac{t^k}{2^k k!} \norm{v_0^{(4-k)/2} \grad_v^k f(t)}_2^2 + \sum_{k=0}^2 \frac{t^k}{2^k k!} \norm{v_0^{(3-k)/2} \grad_v^k \partial_x f(t)}_2^2 \leq C_T.$$ 
    
\end{lemma}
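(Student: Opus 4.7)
The plan is to form a Lyapunov-type functional $\Phi(t)$ equal to the left-hand side of the estimate and derive the differential inequality $\Phi'(t) \leq C_T\Phi(t) + C_T$ by telescoping the dissipation bounds from Lemmas \ref{dissipative} and \ref{v_deriv_lemma}. The time weights $t^k/(2^k k!)$ are calibrated precisely so that the contribution produced by differentiating the weight of the $(k+1)$-th summand is dominated by the dissipative term generated by the $k$-th summand, which sits one $v$-derivative higher. Concretely, set
$$\Phi(t) = \sum_{k=0}^4 \frac{t^k}{2^k k!} \norm{v_0^{(4-k)/2}\grad_v^k f(t)}_2^2 + \sum_{k=0}^2 \frac{t^k}{2^k k!} \norm{v_0^{(3-k)/2}\grad_v^k \del_x f(t)}_2^2.$$
At $t=0$ only the $k=0$ summands survive, and the hypothesis $v_0^{\b-k/2}\del_x^k f^0\in L^2(\bfR^3)$ with $\b>2$ yields $\Phi(0)\leq C$.

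Differentiating $\Phi$ term by term, the $k$-th summand of the first sum generates a boundary contribution $\frac{t^{k-1}}{2^k (k-1)!}\norm{v_0^{(4-k)/2}\grad_v^k f}_2^2$ (for $k\geq 1$) from the weight, and $\frac{t^k}{2^k k!}\frac{d}{dt}\norm{v_0^{(4-k)/2}\grad_v^k f}_2^2$, which the low- or high-order dissipation lemma bounds above by $C_T$ times the summand itself, plus controlled lower-order terms, minus a dissipation $(1-\eps)\norm{v_0^{(3-k)/2}\grad_v^{k+1}f}_2^2$ (with $\eps=0$ in the low-order $k=0,1$ cases). After reindexing $k\mapsto k+1$, the boundary contribution of the $(k+1)$-th summand equals $\frac{t^k}{2^{k+1}k!}\norm{v_0^{(3-k)/2}\grad_v^{k+1}f}_2^2$, which is exactly dominated by $(1-\eps)\frac{t^k}{2^k k!}\norm{v_0^{(3-k)/2}\grad_v^{k+1}f}_2^2$ as soon as $\eps\leq 1/2$. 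The analogous telescoping occurs in the $\del_x$ sum; the dissipations output by the highest terms ($k=4$ in the first sum, $k=2$ in the second) are simply discarded.

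Every remaining term on the right-hand side of the dissipative lemmas is controlled either by $\Phi(t)$ itself or by a previously established bound. The lower-order cross moments $\norm{v_0^{\gamma+j-k}\grad_v^j f}_2^2$ with $j<k$ carry weight exponent $(4-k)/2+j-k\leq (4-j)/2$, hence are dominated by the $j$-th summand of $\Phi$; the mixed terms $\norm{v_0^{\gamma+1/2}\del_x\grad_v^{k-2}f}_2^2$ have weights matching exactly the $\del_x$ summands of $\Phi$; and the stray $\norm{\del_{xx}f}_2^2$ contributions that appear in the $\del_x$ dissipation inequalities are uniformly bounded by Lemma \ref{L7}. One therefore obtains $\Phi'(t)\leq C_T\Phi(t)+C_T$ on $(0,T)$, and Gronwall's inequality finishes the proof.

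The main obstacle is bookkeeping rather than analysis: every lower-order $v$-moment and every mixed $\del_x\grad_v^j$ term generated by the dissipative lemmas must be checked to have weight exponent no greater than that of its matching $\Phi$ summand, and every isolated $\norm{\del_{xx}f}_2^2$ contribution must be seen to fall under the scope of Lemma \ref{L7}. Once these weight comparisons are confirmed, the absorption of the boundary terms by the dissipation reduces to the single arithmetic condition $\eps\leq 1/2$.
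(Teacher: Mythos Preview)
Your proposal is correct and matches the paper's approach: the same telescoping of dissipative inequalities against the time-weight derivatives, with the same absorption condition $\eps\leq 1/2$. The only cosmetic difference is that the paper builds the functional hierarchically (defining $M_1,\dots,M_5$ and bounding each in turn, using the earlier bounds as inputs), whereas you treat the full eight-term sum $\Phi$ at once; the paper in fact remarks at the end of its proof that the single-functional version you wrote works identically.
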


The gain in regularity achieved from the momentum argument is generally expected from the diffusive term.  Additionally, it is possible that the solution gains regularity in its spatial argument as well, but this feature of the system remains unknown.  Precedent exists for this possibility, however, as analogous work of Herau \cite{Herau} and Villani \cite{Villani} has determined that this does, in fact, occur for the linear, non-relativistic Fokker-Planck equation, as long as the given potential is sufficiently smooth.

\section{Proofs of Lemmas and Estimates}

We first prove Lemma \ref{L1}, and this will require an additional result regarding the positivity of solutions to the linear Fokker-Planck equations arising from positive initial data.
\begin{proof}[Lemma \ref{L1}]
Much of our argument is adapted from ideas of Lions  \cite{LionsJL}, Tartar \cite{Tartar}, and Degond \cite{Degond}.  Thus, we sketch the proof of the lemma using results from these papers while correcting for the differences in the systems, including changes in dimension and the appearance of a diffusion operator with variable coefficients.  Consider the linear equation (\ref{LinVlasov}) and define $$\mcL h:= \partial_t h + \hat{v}_1 \partial_x h + F \cdot \nabla_v h - \nabla\cdot(D \nabla_v h).$$
We first comment that solutions $h \in L^\infty((0,T); L^2(\mathbb{R}^3))$ of the equation $\mcL h = g$ exist for any $T > 0$ and $g \in L^\infty((0,T); L^\infty(\mathbb{R}^3))$, and this follows directly from either a variational argument \cite{Degond}, the use of Green's functions \cite{VictoryODwyer}, or by properties of the heat equation on a Riemannian manifold \cite{CalogeroRVMFP}.
With this, we prove a positivity result:
\begin{lemma}
\label{Lnew}
Let $T > 0$ be given. Assume $h_0 \in L^2(\bfR^3)$ and $g \in L^\infty((0,T); L^2(\bfR^3))$ are given with $h\in L^\infty((0,T); L^2(\bfR^3))$ satisfying $\mcL h = g \geq 0$ and $h(0,x,v) = h_0(x,v) \geq 0$. Then, $h(t,x,v) \geq 0$ for all $t \geq 0, x \in \bfR, v \in \bfR^2$.
\end{lemma}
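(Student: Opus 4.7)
The plan is to show that the negative part $h_- := \max(-h, 0)$ satisfies an $L^2$-level Gronwall inequality with vanishing initial data, which forces $h_- \equiv 0$ and hence $h \geq 0$. This is the classical positivity argument for parabolic equations, adapted to accommodate the transport structure of $\mcL$ and the fact that $D$ is only positive semi-definite.

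Formally, I would test $\mcL h = g$ against $-h_-$ and integrate over $\bfR \times \bfR^2$. The time-derivative term produces $\frac{1}{2}\frac{d}{dt}\norm{h_-(t)}_2^2$ via the chain rule $h_-\, \del_t h = -\frac{1}{2}\del_t h_-^2$. The $x$-transport term vanishes after integration by parts in $x$, since $\hat{v}_1$ is independent of $x$. The $v$-transport term contributes
\begin{equation*}
-\int h_-\, F \cdot \grad_v h\, dx\, dv \;=\; -\frac{1}{2}\int (\grad_v \cdot F)\, h_-^2\, dx\, dv,
\end{equation*}
which is dominated in absolute value by $\frac{1}{2}\Vert \grad_v \cdot F \Vert_\infty \norm{h_-(t)}_2^2$ using $F \in W^{1,\infty}$. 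Integrating by parts in $v$ in the diffusion term and using $\grad_v h = -\grad_v h_-$ on $\{h < 0\}$ yields the nonnegative dissipation $\int \grad_v h_- \cdot D\, \grad_v h_-\, dx\, dv \geq 0$, where positive semi-definiteness of $D$ is essential. The source contributes $-\int h_- g\, dx\, dv \leq 0$ since both factors are nonnegative. Assembling,
\begin{equation*}
\frac{d}{dt}\norm{h_-(t)}_2^2 \;\leq\; \Vert \grad_v \cdot F \Vert_\infty\, \norm{h_-(t)}_2^2,
\end{equation*}
and Gronwall applied to $\norm{h_-(0)}_2 = 0$ gives $h_- \equiv 0$ on $[0,T]$.

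The main obstacle is rigorously justifying the chain-rule and integration-by-parts manipulations above for a weak solution $h \in L^\infty((0,T); L^2(\bfR^3))$, since neither $h_-$ nor $\grad_v h_-$ is a priori a well-defined object. I would handle this by renormalization in the style of DiPerna--Lions: test against $\phi'_\eps(h)$ for a smooth convex approximation $\phi_\eps$ of $s \mapsto \tfrac{1}{2}s_-^2$ with $\phi'_\eps$ bounded and supported in $\{s \leq 0\}$, first mollifying $h$ in $(x,v)$ so that the chain rule is legitimate, then passing $\eps \to 0$. Alternatively, the $v$-ellipticity encoded in \eqref{D1} together with the bounded transport coefficients provides enough regularity to place $h$ in $L^2((0,T) \times \bfR; H^1(\bfR^2))$, which directly licenses the $v$-integration by parts. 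In either case the signs above are preserved in the limit, and the Gronwall inequality closes the argument.
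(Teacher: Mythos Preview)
Your argument is correct and follows essentially the same idea as the paper: test the equation against the negative part and exploit the sign of the diffusion and source terms to force $h_- \equiv 0$. The only cosmetic difference is that the paper absorbs the $\nabla_v\!\cdot F$ term via an exponential weight $u=e^{-\lambda t}h$ with $\lambda>\tfrac12\|\nabla_v\!\cdot F\|_\infty$ and integrates once over space--time (invoking a Tartar-type identity for the transport part), whereas you work pointwise in $t$ and close with Gronwall---these are equivalent formulations of the same estimate.
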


\begin{proof}[Lemma \ref{Lnew}] Let $\lambda > \frac{1}{2}\norm{\nabla_v \cdot F}_\infty$ be given. Define $u(t,x,v) = e^{-\lambda t} h(t,x,v)$ and $f(t,x,v) = e^{-\lambda t}  g(t,x,v)$.  These functions then satisfy
\begin{equation}
\label{lambda} 
\left \{ \begin{gathered}
\mcL   u + \lambda   u =  f\\
  u(0,x,v) =  h_0(x,v)
\end{gathered} \right.
\end{equation}
Let $u_-(t,x,v)= \max \{-(  u(t,x,v)),0 \}$.  In what follows, we will use the notation $\ip{\cdot }{\cdot }$ to denote the $L^2$ inner product in $(t,x,v)$ and $\norm{\cdot}_2$ to denote the corresponding induced norm.  It follows immediately from \cite{Degond, Tartar} that
\begin{equation}
\label{Tartar}
\ip{\frac{\del   u}{\del t} + \hat{v}_1 \del_x   u}{  \ u_-}= \frac{1}{2} \left( \iint \abs{  u_-(0,x,v)}^2 \, dx \, dv - \iint \abs{  u_-(t,x,v)}^2 \, dx \, dv \right) .
\end{equation}
Using this, we find
\begin{eqnarray*}
\ip{ f}{  u_-} & = & \ip{\mcL   u + \lambda   u}{  u_-}\\
& = & \ip{\frac{\del   u}{\del t} + \hat{v}_1 \del_x   u}{  u_-} 
+ \ip{F \cdot \nabla_v  u}{  u_-} - \ip{\grad_v \cdot (D \grad_v \cdot u)}{  u_-} 
+ \lambda \ip{  u}{  u_-}  
\end{eqnarray*}
For the last term we split the integral into two portions, namely
\begin{align*} 
\ip{  u}{  u_-} &= \int_0^T\iint u(t,x,v) u_{-}(t,x,v)\,dv\,dx\,dt\\ & = \int_{A} u(t,x,v) u_{-}(t,x,v)\,dv\,dx\,dt + \int_{A^c} u(t,x,v) u_{-}(t,x,v)\,dv\,dx\,dt
\end{align*}
where $A = {\{(t,x,v) : u(t,x,v) \geq 0\}}$. On the set $A$, we have $u_{-}(t,x,v) = 0$, and the corresponding integrals vanish.  On $A^c$ we have $u_{-}(t,x,v) = -u(t,x,v)$ and hence
\begin{align*}
\int_0^T \iint u(t,x,v)u_{-}(t,x,v) \,dv\,dx\,dt & =  -\int_{A^c}  \abs{u_{-}(t,x,v)}^2 \,dv\,dx\,dt\\
& =  - \int_0^T\iint \abs{u_{-}(t,x,v)}^2 \,dv\,dx\,dt.
\end{align*}
Hence, we find
$$\lambda\ip{ u}{ u_-} = -\lambda \Vert u \Vert_2$$
After a similar analysis for the other terms above, we find
$$\ip{F \cdot \nabla_v  u}{  u_-} = -\ip{F \cdot \nabla_v  u_-}{  u_-}.$$
For the diffusion term, we proceed similarly and integrate by parts to find
\begin{align*}
- \ip{\grad_v\cdot (D \grad_v \cdot u)}{  u_-}
&= \ip{D \grad_v u }{\grad_v u_{-}}\\
&= -\ip{D\grad_v u_{-}}{\grad_v   u_-}\\
&\leq  0,
\end{align*}
since $D$ is positive semi-definite. Therefore, using these identities with (\ref{Tartar}) we have the inequality
$$\ip{ f}{  u_-} \leq \frac{1}{2} \left ( \iint \abs{  u_-(0,x,v)}^2 \,dx\,dv - \iint \abs{  u_-(t,x,v)}^2 \,dx\,dv \right ) - \ip{F \cdot \nabla_v  u_-}{  u_-}  - \lambda \norm{  u_-}_2^2$$
By assumption, $ h_0(x,v) \geq 0$ and thus $  u_-(0,x,v) = 0$.  The first term above is then nonpositive and
$$ \ip{ f}{  u_-}  \leq  - \ip{F \cdot \nabla_v  u_-}{  u_-}  - \lambda \norm{  u_-}_2^2.$$
Lastly, we integrate by parts to find
\begin{align*}
-\ip{F \cdot \nabla_v  u_-}{  u_-} &=  -\int_0^T \iint F(t,x,v)\cdot \nabla_v \left ( \frac{1}{2}\vert u_-\vert^2 \right ) ,dv\,dx\,dt\\
& = \frac{1}{2} \int_0^T \iint \nabla_v \cdot F(t,x,v)  \vert u_- \vert^2 dv\,dx\,dt\\
& \leq \frac{1}{2} \Vert \nabla_v \cdot F \Vert_\infty  \Vert u_- \Vert_2^2 
\end{align*}
We finally have
$$ \ip{ f}{  u_-}  \leq  \left ( \frac{1}{2}\Vert \nabla_v \cdot F \Vert_\infty - \lambda \right ) \norm{  u_-}_2^2 \leq 0.$$
However, by hypothesis $ f(t,x,v) \geq 0$ and by definition $u_-(t,x,v) \geq 0$, so $\ip{ f}{  u_-} \geq 0$.  Therefore, it must be the case that $\norm{  u_-}_2 = 0$, from which it follows that $  u_- = 0$ and hence $ h(t,x,v) \geq 0$.
\end{proof}

Now, we utilize Lemma \ref{Lnew} and a very simple argument to finish the proof of Lemma \ref{L1}.  Let $g \in L^1((0,T); L^\infty(\bfR^{3})), F \in W^{1,\infty}((0,T) \times \bfR^3;\bfR^2)$, and $h_0 \in L^2(\bfR^3) \cap L^\infty(\bfR^3)$ be given.  Assume $h \in L^\infty((0,T); L^2(\bfR^3))$ satisfies $\mcL  h =  g(t,x,v)$ in the weak sense and $ h(0,x,v) =  h_0(x,v).$  Define $$w(t,x,v) := \norm{ h_0}_\infty + \int_0^t \norm{ g(s)}_\infty \, ds \ - \  h(t,x,v). $$  Then, we have $$w(0,x,v) = \norm{ h_0}_\infty -  h_0(x,v) \geq 0$$ and 
\begin{eqnarray*}
\mcL  w & = & \norm{g(t)}_\infty - \mcL h\\
& = & \norm{g(t)}_\infty -  g(t,x,v)\\
& \geq & 0.
\end{eqnarray*}
Thus, by Lemma \ref{Lnew}, we find $ w(t,x,v) \geq 0$, by which it follows that
$$ h(t,x,v) \leq \norm{ h_0}_\infty + \int_0^t \norm{ g(s)}_\infty\,ds$$ for all $t,x,v$.  Finally, taking the supremum in $(x,v)$, the conclusion follows.
\end{proof}

\begin{proof}[Lemma \ref{L2}]
To prove the cone estimate, we begin by using conservation of mass.  Integrating the Vlasov equation over all $(x,v)$ we find
$$ \frac{d}{dt} \iint f(t,x,v) \ dv \ dx = 0.$$ Thus, using the decay of $f^0$ we find for every $t \in [0,T]$
\begin{equation}
\label{mass}
\iint f(t,x,v) \ dv \ dx = \iint  f^0(x,v) \ dv \ dx < \infty.
\end{equation}

To derive the necessary energy identities, we first rewrite the Fokker-Planck term in the Vlasov equation as
$$\nabla_v \cdot ( D \nabla_v f) = v_0^{-1} \biggl ( \partial_{v_1} ( v_1^2 \partial_{v_1} f) + \partial_{v_2} ( v_2^2 \partial_{v_2} f) + 2v_1v_2\partial_{v_1v_2} f + \Delta_v f \biggr ).$$
Then, multiplying the Vlasov equation by $v_0$ and integrating in $v$, the Fokker-Planck term becomes
\begin{eqnarray*}
\int v_0 \nabla_v \cdot ( D \nabla_v f) \ dv & = & \int \left ( \partial_{v_1} ( v_1^2 \partial_{v_1} f) + \partial_{v_2} ( v_2^2 \partial_{v_2} f) + 2v_1v_2\partial_{v_1v_2} f + \Delta_v f \right )\\
& = & \int 2 v_1 v_2 \partial_{v_1v_2} f  \ dv_1 \ dv_2 \\
& = & 2 \int f \ dv
\end{eqnarray*}
after two integrations by parts.
Hence, using the divergence structure of the Vlasov equation, we arrive at the local energy identity
\begin{equation}
\label{localenergy}
\partial_t e + \partial_x m =2 \int f(t,x,v) \ dv
\end{equation}
where $$ e(t,x) = \int v_0 f(t,x,v) \ dv + \frac{1}{2} \left ( \vert E(t,x) \vert^2 + \vert B(t,x) \vert^2 \right )$$ and
$$ m(t,x) = \int v_1 f(t,x,v) \ dv + E_2(t,x) B(t,x).$$
Since $ f^0$ has compact support in $x$ with suitable decay in $v$, we find $v_0f^0 \in L^1(\bfR^3)$.
We can then integrate (\ref{localenergy}) over all space to deduce the global energy identity
$$\frac{d}{dt} \int e(t,x) \ dx = 2\iint f^0(x,v) \ dx \ dv$$ whence we find $$\int e(t,x) \ dx  \leq C(1+t)$$ for all $t \in [0,T)$ and $E_1, E_2, B \in L^\infty([0,T]; L^2(\bfR))$.

To derive local estimates, we fix $(t,x)$, integrate (\ref{localenergy}) along the backwards cone in space-time $\{ (s,y) \in (0,t) \times \bfR : \vert y - x \vert \leq t - s\}$, and use Green's Theorem to find
\begin{eqnarray*} \int_0^t \biggl [ (e + m)(s, x + t - s) & + & (e-m)(s, x-t+s) \biggr ] \ ds \\
& & = \int_{x-t}^{x+t} e(0,y) \ dy + 2 \int_0^t \int_{x-t+s}^{x+t-s} \int f(s, y,v) \ dv dy ds.
\end{eqnarray*}
Using the positivity of the mass and energy, the assumptions on the data, and conservation of mass, the right side satisfies
\begin{eqnarray*}
\int_{x-t}^{x+t} e(0,y) \ dy + 2\int_0^t \int_{x-t+s}^{x+t-s} \int f(s, y, v) \ dv dy ds & \leq & \int e(0,y) \ dy + 2 \int_0^t \iint f(s,y,v) \ dv dy \ ds \\
& = & \int e(0,y) \ dy + 2 \int_0^t \left ( \iint  f^0(y,v) \ dv \ dy \right ) \ ds \\
& \leq & C(1+t)
\end{eqnarray*}
and this yields the first result.

The other conclusions of the lemma then follow from the first.  More specifically, we find
\begin{equation}
\label{vineq}
v_0 \pm v_1 = \frac{v_0^2 - v_1^2}{v_0 \mp v_1} = \frac{1+ v_2^2}{v_0 \mp v_1} \geq \frac{2 \vert v_2 \vert}{v_0 \mp v_1} \geq \frac{2 \vert v_2 \vert}{2 v_0} = \vert \hat{v}_2 \vert
\end{equation}
and by (\ref{cone})
\begin{eqnarray*} \int_0^t \vert j_2(s, x \pm (t-s)) \vert ds & \leq & \int_0^t \int \vert \hat{v}_2 \vert f(s, x \pm (t-s), v) \ dv \ ds\\
&  \leq & \int_0^t \int (v_0 \pm v_1) f(s, x \pm (t-s), v) \ dv \ ds\\
& \leq & C(1 + t).
\end{eqnarray*}

Next, we represent the fields in terms of the source $j_2$ in the associated transport equations.  Either adding or subtracting the equations for $E_2$ and $B$ in (\ref{RVMFP}) yields 
$$ \partial_t (E_2 \pm B) \pm \partial_x (E_2 \pm B) = - j_2.$$
Thus, we can write the sum or difference of the fields in terms of initial data and an integral of $j_2$ along one side of the backwards cone, namely
\begin{equation}
\label{E2B}
(E_2 \pm B)(t,x) = (E_2 \pm B)(0,x \mp t) - \int_0^t j_2(s,x \mp (t-s)) \ ds.
\end{equation}
Then, in view of the previous conclusion of the lemma and the assumption on the initial fields, we find $$\Vert (E_2 \pm B)(t) \Vert_\infty \leq C(1+t)$$ and since 
$$ E_2(t,x) = \frac{1}{2} (E_2 + B)(t,x) + \frac{1}{2} (E_2 - B)(t,x),$$ and similarly for $B$, it follows that $\Vert E_2(t) \Vert_\infty$ and $\Vert B(t) \Vert_\infty$ are controlled by this same quantity.

Finally, control of $E_1$ follows from conservation of mass and the assumption on the background density.  Integrating the equation for $E_1$ and using the assumption on $E_1(0,x)$ yields $$E_1(t,x) = \int_{-\infty}^x \rho(t,y) \ dy$$ and we find for $x \in \bfR$
$$ \vert E_1(t,x) \vert \leq \iint f(t,y,v) \ dv \ dy + \Vert \phi \Vert_1 \leq C.$$  The second conclusion of the theorem then follows by adding the field estimates.
\end{proof}

\begin{proof}[Lemma \ref{L3}]
We begin by noting that $v$ is an eigenvector of $D$ since 
\begin{equation}
\label{D2}
Dv = v_0^{-1} [ \mathbb{I} + v \times v] = v_0^{-1} [ v + (v \cdot v) v ] = v_0^{-1} (1 + \vert v \vert^2) v = v_0 v.
\end{equation}
Now, let $\gamma \geq 0$ be given.  Multiplying the Vlasov equation by $v_0^\gamma$, we find
\begin{equation}
\label{Vlasovmoment}
\partial_t(v_0^\gamma f) + \partial_x(\hat{v}_1 v_0^\gamma f) + \nabla_v \cdot [ K v_0^\gamma f] - \nabla_v(v_0^\gamma) \cdot K f = v_0^\gamma \nabla_v \cdot [D \nabla_v f].
\end{equation}
We first compute the right side of this equation. Using (\ref{D1}) and  (\ref{D2}), we find
\begin{eqnarray*}
v_0^\gamma \nabla_v \cdot [D \nabla_v f] & = & \nabla_v \cdot [v_0^\gamma D\nabla_v f ] - \nabla_v (v_0^\gamma) \cdot D\nabla_v f\\
& = & \nabla_v \cdot [D\nabla_v (v_0^\gamma f) ]  - \nabla_v \cdot [D\nabla_v (v_0^\gamma) f ] - \nabla_v (v_0^\gamma) \cdot D\nabla_v f\\
& = & \nabla_v \cdot [D\nabla_v (v_0^\gamma f) ]  - \gamma \nabla_v \cdot [ v_0^{\gamma-2}f D v  ] - \gamma v_0^{\gamma-2} v \cdot D\nabla_v f\\
& = & \nabla_v \cdot [D\nabla_v (v_0^\gamma f) ]  - \gamma \nabla_v \cdot [ v_0^{\gamma-1} f v  ] - \gamma v_0^{\gamma-2} Dv \cdot \nabla_v f\\
& = & \nabla_v \cdot [D\nabla_v (v_0^\gamma f) ]  - \gamma [(\gamma-1) v_0^{\gamma - 3} \vert v \vert^2 + v_0^{\gamma-1}v \cdot \nabla_v f + 2 v_0^{\gamma-1} f  ]  - \gamma v_0^{\gamma-1} v \cdot \nabla_v f\\
& = & \nabla_v \cdot [D\nabla_v (v_0^\gamma f) ]  - \gamma (\gamma-1) v_0^{\gamma - 3} \vert v \vert^2 - 2\gamma v_0^{\gamma-1}v \cdot \nabla_v f - 2\gamma v_0^{\gamma-1} f.
\end{eqnarray*}
The next to last term here can be rewritten as
\begin{eqnarray*}
-2\gamma v_0^{\gamma-1}v \cdot \nabla_v f & = &- 2\gamma v_0^{-1} v \cdot \nabla_v (v_0^\gamma f) +  2\gamma^2 v_0^{\gamma-3} v \cdot v f\\
& = & - 2\gamma \hat{v} \cdot \nabla_v (v_0^\gamma f) +  2\gamma^2 v_0^{\gamma-1} \left ( \frac{\vert v \vert^2}{1 + \vert v \vert^2} \right )  f.
\end{eqnarray*}
Combining this with (\ref{Vlasovmoment}) yields
\begin{equation}
\label{Vgamma}
\begin{gathered}
\partial_t(v_0^\gamma f) + \partial_x(\hat{v}_1 v_0^\gamma f) + \nabla_v \cdot [ K v_0^\gamma f] - \nabla_v(v_0^\gamma) \cdot K f\\
= \nabla_v \cdot [D\nabla_v (v_0^\gamma f) ]  - \gamma (\gamma-1) v_0^{\gamma - 3} \vert v \vert^2 - 2\gamma \hat{v} \cdot \nabla_v (v_0^\gamma f) +  2\gamma^2 v_0^{\gamma-1} \left ( \frac{\vert v \vert^2}{1 + \vert v \vert^2} \right )  f   - 2\gamma v_0^{\gamma-1} f.
\end{gathered}
\end{equation}
Thus, if we rearrange terms and use the operator $$\mcV h:= \partial_t h + \hat{v}_1 \partial_x h + (K + 2\gamma \hat{v}) \cdot \nabla_v h - \nabla\cdot(D \nabla_v h),$$
we have
\begin{equation}
\label{V}
\mcV(v_0^\gamma f) = g(t,x,v)
\end{equation}
where
$$g(t,x,v) = \nabla_v(v_0^\gamma) \cdot K f - \gamma (\gamma-1) v_0^{\gamma - 3} \vert v \vert^2 + 2\gamma^2 v_0^{\gamma-1} \left ( \frac{\vert v \vert^2}{1 + \vert v \vert^2} \right )  f- 2\gamma v_0^{\gamma-1} f.$$
Estimating $g$, we find
\begin{eqnarray*}
\vert g(t,x,v) \vert & \leq & \gamma  v_0^{\gamma - 1} \vert \hat{v} \cdot K \vert f + \gamma(\gamma-1) v_0^{\gamma-1} \vert \hat{v} \vert^2 + 2\gamma^2 v_0^{\gamma-1} f + 2\gamma v_0^{\gamma -1} f\\
& \leq & \gamma  v_0^{\gamma - 1} ( \Vert E(t) \Vert_\infty + \Vert B(t) \Vert_\infty) f +  C v_0^{\gamma-1}f\\
& \leq & C(1+t) \Vert v_0^{\gamma-1}f(t) \Vert_\infty
\end{eqnarray*}

Since the coefficients of $\mcV$ satisfy the hypotheses of Lemma \ref{L1}, we use this result with $h = v_0^\gamma f$, $\mcL = \mcV$, and $g$ defined as above.  This yields
$$\Vert v_0^\gamma f(t) \Vert_\infty \leq \Vert v_0^\gamma  f^0 \Vert_\infty +C \int_0^t (1+s) \Vert v_0^{\gamma -1} f(s) \Vert_\infty \ ds.$$
Of course, the same lemma can be invoked with $h = f$ and $g = 0$ using the Vlasov equation in order to find $$\Vert f(t) \Vert_\infty \leq \Vert  f^0 \Vert_\infty$$ for all $t \in [0,T]$. With this bound on the particle distribution, which represents the $\gamma = 0$ case above, we use induction to bound $\Vert v_0^\gamma f(t) \Vert_\infty$ for any $\gamma \geq 0$ such that $\Vert v_0^\gamma  f^0 \Vert_\infty$ is finite, and the first conclusion follows.

The second conclusion is a straightforward application of the first. Namely, for any $\gamma \in [0,\a-2)$,
$$ \int v_0^\gamma f(t,x,v) \ dv \leq \norm{ v_0^\a f(t) }_\infty \left ( \int v_0^{\gamma - \a} \ dv \right ) \leq C(1+ t)^{2\a}$$
since $\gamma - \a < -2$.
\end{proof}

\begin{proof}[Lemma \ref{L4}]
We begin by noting that $E_1$ can be handled separately from the other field terms, since by Lemma \ref{L3}
$$\partial_x E_1 =  \int f(t,x,v) \ dv + \phi(x) \leq \left \Vert \int f(t) dv  \right \Vert_\infty + \Vert \phi \Vert_\infty  \leq C(1 + t)^{2\a}.$$
The same bound holds using this argument for $ \partial_t E_1 = j_1$ since $\vert \hat{v}_1 \vert \leq 1$.

Next, we represent the field equations for $E_2$ and $B$ as in the proof of Lemma \ref{L2}.  We will consider only $x$-derivatives and the term $(E_2+B)(t,x)$, but note that the same computations below can be done for $(E_2-B)(t,x)$ and time derivatives. Using (\ref{E2B}) and differentiating in $x$, we find
$$\partial_x (E_2 + B)(t,x) = (E_2 + B)'(0,x-t) - \int_0^t \int \hat{v}_2 \partial_x f(s,x - (t-s),v) \ dv ds.$$ At this point, we wish to project $\partial_x$ onto the directions of ``good'' derivatives included in the field representation.  This idea was used by Glassey and Schaeffer \cite{GlaSch90} for the collisionless problem and originally developed for the three-dimensional relativistic Vlasov-Maxwell system by Glassey and Strauss \cite{GlStr}.  We introduce the operators $$ \left \{ \begin{gathered} T_+ = \partial_t + \partial_x \\ S = \partial_t + \hat{v}_1 \partial_x \end{gathered} \right. $$ and transform $x$-derivatives on the density as $$\partial_x = \frac{1}{1-\hat{v}_1} (T_+ - S).$$ Contrastingly, the operator $T_- =  \partial_t - \partial_x$ would be needed for an estimate of $E_2 - B$.  Using the Vlasov equation, we can write $$Sf = \partial_t f + \hat{v}_1 \partial_x f  = -\nabla_v (Kf) + \nabla_v \cdot (D \nabla_v f)$$ so that integrating by parts yields
\begin{eqnarray*}
\int_0^t \int \hat{v}_2 \partial_x f(s,x-t+s,v) \ dv ds & = & \int_0^t \int \frac{\hat{v}_2}{1-\hat{v}_1} (T_+f - Sf)(s,x-t+s,v) \ dv ds\\
& = & \int_0^t \int \frac{\hat{v}_2}{1-\hat{v}_1} \biggl [ \frac{d}{ds} (f(s,x-t+s,v)) + \nabla_v \cdot (Kf)(s,x-t+s,v) \\
& \ & \quad - \nabla_v \cdot (D\nabla_v f) (s,x-t+s,v) \biggr ] \ dv ds\\
& = & \int \frac{\hat{v}_2}{1-\hat{v}_1} [f(t,x,v) -  f^0(x-t,v)] \ dv \\
& \ & \quad + \int_0^t \int  \frac{\hat{v}_2}{1-\hat{v}_1}  \nabla_v \cdot (Kf)(s,x-t+s,v) \ dv ds\\
& \ & \quad - \int_0^t \int \frac{\hat{v}_2}{1-\hat{v}_1}  \nabla_v  \cdot (D\nabla_v f) (s,x-t+s,v) \biggr ] \ dv ds\\
& =: & I + I + III
\end{eqnarray*}

The first term is easily estimated since moments of the density are bounded.  We use (\ref{vineq}) and $\a > 3$ to find
\begin{eqnarray*}
I & = & \int \frac{v_2}{v_0-v_1} [ f(t,x,v) -  f^0(x-t,v)] \ dv \\
& \leq & \int \frac{\vert v_2 \vert(v_0+v_1)}{1+v_2^2} f(t,x,v) \ dv\\
& \leq & \Vert v_0^\a f(t) \Vert_\infty \int v_0^{1-\a} \ dv \leq C (1 + t)^{2\a}.
\end{eqnarray*}

To estimate $II$, we first integrate by parts to find
\begin{eqnarray*}
\int_0^t \int  \frac{\hat{v}_2}{1-\hat{v}_1}  \nabla_v \cdot (Kf)(s,x-t+s,v) \ dv ds = - \int_0^t \int \nabla_v  \left ( \frac{\hat{v}_2}{1-\hat{v}_1} \right ) \cdot (Kf)(s,x-t+s,v) \ dv ds\\
+ \lim_{\vert v \vert \to \infty} \int_0^t \frac{\hat{v}_2}{1-\hat{v}_1} (Kf)(s,x-t+s,v) \cdot \frac{v_\perp}{\vert v \vert}\ ds 
\end{eqnarray*}
where $v_\perp = \langle v_2, -v_1 \rangle.$
The boundary term vanishes on $(0,T)$ because $K$ and $v_0^\a f$ are bounded in $L^\infty$, and thus moments can be used to introduce sufficient decay in $v$.  For the remaining term, we compute the gradient
$$ \nabla_v  \left ( \frac{\hat{v}_2}{1-\hat{v}_1} \right ) =  \left \langle \frac{\hat{v}_2}{v_0-v_1}, \frac{1}{v_0-v_1} - \frac{\hat{v}_2v_2}{(v_0-v_1)^2} \right \rangle$$
The first term is bounded since (\ref{vineq}) implies $\vert \hat{v}_2 \vert \leq v_0 - v_1$. Similarly, one can show the second term is bounded by $3v_0$ using (\ref{vineq}).  Hence, using $\a > 3$, we have
$$II \leq \int_0^t \Vert K(s) \Vert_\infty \Vert v_0^\a f(s) \Vert_\infty  \left ( \int v_0^{1-\a} \ dv \right ) \ ds  \leq C(1+t)^{2(\a + 1)}$$

Finally, we use the symmetry of $D$ and integrate by parts twice in $III$ to find 
\begin{eqnarray*}
\int_0^t \int \frac{\hat{v}_2}{1-\hat{v}_1}  \nabla_v  \cdot (D\nabla_v f) \biggr |_{(s,x-t+s,v)} \ dv ds & = & \int_0^t \int \nabla_v \cdot \left [ D\nabla_v  \left ( \frac{\hat{v}_2}{1-\hat{v}_1} \right ) \right ] f(s,x-t+s,v) \ dv ds\\
& \ & \ + \lim_{\vert v \vert \to \infty} \int_0^t \frac{\hat{v}_2}{1-\hat{v}_1} \nabla_v f(s,x-t+s,v) \cdot D v_\perp \frac{1}{\vert v \vert} \ ds \\
& \ & \ - \lim_{\vert v \vert \to \infty} \int_0^t \nabla_v \left (\frac{\hat{v}_2}{1-\hat{v}_1} \right ) \cdot Dv_\perp\frac{1}{\vert v \vert} f(s,x-t+s),v) \ ds 
\end{eqnarray*}
For the boundary terms, we use the property $D v_\perp = v_0^{-1} v_\perp$ so that an extra order of decay appears, and these terms vanish on $(0,T)$. 
To estimate the remaining term, a long computation yields the bound
$$\biggl \vert \nabla_v \cdot \left [ D\nabla_v  \left ( \frac{\hat{v}_2}{1-\hat{v}_1} \right ) \right ] \biggr \vert \leq 4.$$ Thus, we find for $\a > 2$
$$ III \leq 4 \int_0^t \int f(s,x-t+s,v) \ dv ds \leq \left ( \int _0^t \Vert v_0^\a f(s) \Vert_\infty ds \right )  \left ( \int v_0^{-\a} dv \right )\leq C(1 + t)^{2\a+1}.$$
Combining the estimates and using the regularity of the initial fields, each term is controlled by $C(1+t)^{2(\a +1)}$.  Thus, the bound on $\Vert \partial_x(E_2 + B)(t) \Vert_\infty$ follows,  as does the conclusion of the lemma. 
\end{proof}

\begin{proof}[Lemma \ref{L5}]
  We proceed by using energy estimates.  We calculate:
  \begin{align*}
  \frac{1}{2}\frac{d}{dt}\norm{f(t)}_2^2 &= \ip{-\hat{v}_1\del_x f-K\cdot \grad_v f+\grad_v\cdot D\grad_v f}{f}\\  
  &= -\ip{\hat{v}_1\del_x f}{f}-\ip{K\cdot \grad_vf}{f}+\ip{\grad_v\cdot D\grad_v f}{f}.
\end{align*}
  Notice that the first two terms are pure derivatives in $x$ and $v$, respectively.  Thus,
  $$\ip{\hat{v}_1\del_x f}{f}= \frac{1}{2} \iint \partial_x \left ( \hat{v}_1 f^2 \right ) \ dv \ dx = 0$$
  and
  $$\ip{K\cdot \grad_v f}{f}= \frac{1}{2}  \iint \nabla_v \cdot \left ( K f^2 \right ) \ dv \ dx = 0 .$$   
Finally $\ip{\grad_v\cdot D\grad_v
    f}{f}=-\norm{D^{1/2}\grad_vf(t)}_2$.  Hence
  $\frac{d}{dt}{\norm{f(t)}_2^2}\leq 0$ and the first conclusion follows.  Similarly, we may multiply by $v_0^{2\gamma}$ and proceed in the same manner
\begin{align*}
 \frac{1}{2}\frac{d}{dt}\norm{v_0^\gamma f(t)}_2^2 &= 
   -\ip{v_0^\gamma \hat{v}_1\del_x f}{v_0^\gamma f}
   -\ip{v_0^\gamma K\cdot \grad_v f}{v_0^\gamma f}
   +\ip{v_0^\gamma \grad_v\cdot D\grad_v f}{ v_0^\gamma f}
\end{align*}
   As in the previous conclusion of the lemma the first term is zero.
   Integrating by parts in the second term we find
   \begin{align*}
   -\ip{v_0^\gamma K\cdot \grad_v f}{v_0^\gamma f} & = \iint v_0^{2\gamma} K \cdot \nabla_v \left ( \frac{1}{2} f^2 \right ) \ dv \ dx \\
   &= - 2\gamma \iint v_0^{2\gamma-1} (\hat{v} \cdot K) \frac{1}{2} f^2 \ dv \ dx 
   \end{align*}
 Hence, this yields
   $$ \left \vert \ip{v_0^\gamma K\cdot \grad_v f}{v_0^\gamma f}  \right \vert \leq
C\norm{K(t)}_\infty\norm{v_0^{\gamma-\frac{1}{2}}f(t)}_2.$$
For the last term we integrate by parts and use the symmetry of $D$,
   \begin{align*}
   \ip{v_0^\gamma \grad_v\cdot (D\grad_vf)}{v_0^\gamma f}  &=
   -2\gamma\ip{v_0^{2\gamma-1} \hat{v}\cdot D\grad_vf}{f}-\norm{v_0^\gamma D^{1/2}\grad_v f(t)}_2^2\\
   &= -2\gamma\ip{v_0^{2\gamma-1}v \cdot \grad_vf}{f} -\norm{v_0^\gamma D^{1/2}\grad_v f(t)}_2^2\\
   &= -2\gamma \iint v_0^{2\gamma-1} v\cdot \grad_v \left ( \frac{1}{2} f^2 \right)  \ dv \ dx -\norm{v_0^\gamma D^{1/2}\grad_v f(t)}_2^2
   \end{align*}
 We may drop the latter term.  After integrating by parts again in
   $v$ we can bound the former term by $C\norm{v_0^{\gamma-\frac{1}{2}} f(t)}_2^2$.
   Putting the estimates together and using the field bound of Lemma \ref{L2}, we find 
   $$\frac{1}{2}\frac{d}{dt}\norm{v_0^\gamma f(t)}_2^2 \leq C(1+t) \norm{v_0^{\gamma-\frac{1}{2}} f(t)}_2^2.$$
   Using the first conclusion of the lemma for the $\gamma = 1/2$ case and proceeding by induction yields $$\norm{v_0^\gamma f(t)}_2^2
   \leq C(1+t)^{4\gamma}\norm{v_0^\gamma  f^0}_2^2 \leq C_T.$$ for every $\gamma \geq 0$ for which the norm of the initial data is finite.
\end{proof}

\begin{proof}[Lemma \ref{L6}]
To begin, we estimate derivatives of the density in $x$, and first define some notation. Since density derivatives will depend upon field derivatives, we let 
$$\mathcal{F}(t) =  \Vert E(t) \Vert_\infty + \Vert B(t) \Vert_\infty +\Vert \partial_x E(t) \Vert_\infty + \Vert \partial_x B(t) \Vert_\infty$$ and note that $\Vert \mathcal{F} \Vert_\infty \leq C_T$ by Lemmas \ref{L2} and \ref{L4}. We differentiate the Vlasov equation in $x$, multiply by $v_0^{2\gamma} \partial_x f$ and integrate to yield
\begin{eqnarray*}
\frac{1}{2} \frac{d}{dt} \left \Vert v_0^\gamma \partial_x f (t)\right \Vert_2^2 & = & - \iint \partial_x \left ( \frac{1}{2} \hat{v}_1 v_0^{2\gamma}  \vert \partial_{x} f \vert^2 \right )  dv \ dx - \iint v_0^{2\gamma} \partial_x f \nabla_v \cdot \left ( \partial_x K f + K \partial_x f \right ) \ dx \ dv\\
& \ &     + \iint v_0^{2\gamma} \partial_x f \nabla_v \cdot ( D \nabla_v \partial_x f) \ dv \ dx\\
& = & \iint \left [ v_0^{2\gamma-1} \left ( 2\gamma \hat{v} \partial_x f + v_0 \nabla_v \partial_x f \right ) \cdot ( \partial_x K f) + \frac{1}{2}v_0^{2\gamma} K\cdot \nabla_v (\vert \partial_x f \vert^2) \right ] \ dx \ dv \\
& \ &    - \iint v_0^{2\gamma-1} \left (2\gamma \hat{v}  \partial_x f + v_0  \nabla_v \partial_x f \right ) \cdot ( D \nabla_v \partial_x f) \ dv \ dx\\
& =: & I + II
\end{eqnarray*}
Here, we have integrated by parts in $v$ and used the divergence-free structure of $K$, as well as, the fact that the transport term above is a pure $x$-derivative along with the compact $x$-support of the particle distribution.  Using Cauchy's inequality with $\eps$ we find for any $\eps > 0$
\begin{eqnarray*}
I & \leq &C\iint \left [ \Vert \partial_x K(t) \Vert_\infty\left ( v_0^{2\gamma-1} \vert \partial_x f \vert f + v_0^{2\gamma} \vert \nabla_v \partial_x f \vert f \right ) + \Vert K(t) \Vert_\infty v_0^{2\gamma-1} \vert \partial_x f \vert^2 \right ] dx dv \\
& \leq & C\mathcal{F}(t) \left ( \norm{ v_0^{\gamma -\frac{1}{2}} \partial_x f(t) }_2^2 + \norm{ v_0^{\gamma-\frac{1}{2}}f(t) }_2^2 +  {\eps}\norm{ v_0^{\gamma -\frac{1}{2}} \nabla_v \partial_x f(t) }_2^2 + \frac{1}{\eps}\norm{ v_0^{\gamma+\frac{1}{2}} f (t)}_2^2 \right )\\
& \leq & C_T \left ( \norm{ v_0^{\gamma -\frac{1}{2}} \partial_x f(t) }_2^2 + \norm{ v_0^{\gamma+\frac{1}{2}} f(t) }_2^2+ \eps \norm{ v_0^{\gamma -\frac{1}{2}} \nabla_v \partial_x f(t) }_2^2 \right ) 
\end{eqnarray*}

Then, the symmetry of $D$ along with $D \hat{v} = v$ implies
\begin{eqnarray*}
II & = & - \iint v_0^{2\gamma-1} \left ( 2\gamma \hat{v}  \partial_x f + v_0  \nabla_v \partial_x f \right ) \cdot ( D \nabla_v \partial_x f) \ dv \ dx\\
& = & -\gamma \iint v_0^{2\gamma - 1} v \cdot \nabla_v (\vert \partial_x f \vert^2) \ dv \ dx - \Vert v_0^\gamma D^{1/2} \nabla_v \partial_x f \Vert_2^2 \\
& \leq & C \norm{ v_0^{\gamma -\frac{1}{2}} \partial_x f (t)}_2^2 - \norm{ v_0^{\gamma-\frac{1}{2}} \nabla_v \partial_x f(t) }_2^2. 
\end{eqnarray*}
Combining $I$ and $II$, we use Lemma \ref{L5} to find for $\eps$ sufficiently small
$$ \frac{d}{dt} \norm{v_0^\gamma \partial_x f(t) }_2^2  \leq C_T \left ( \norm{ v_0^{\gamma -\frac{1}{2}} \partial_x f (t)}_2^2  + \norm{ v_0^{\gamma+\frac{1}{2}} f(t) }_2^2\right ) \leq C_T \left (1 +  \norm{ v_0^{\gamma -\frac{1}{2}} \partial_x f(t) }_2^2 \right).$$
If we compute this for $\gamma = 0$ and use the bound on $\mathcal{F}$, the result is just 
$$ \frac{d}{dt} \left \Vert \partial_x f(t) \right \Vert_2^2  \leq C_T \Vert f(t) \Vert_2^2 $$ which, by Lemma \ref{L5}, leads to
$$ \left \Vert \partial_x f(t) \right \Vert_2^2  \leq C_T \left ( \left \Vert \partial_x f^0 \right \Vert_2^2 + \left \Vert f^0 \right  \Vert_2^2 \right ) \leq C_T$$ for every $t \in [0,T]$.
Then, by induction, for every $\gamma \geq 0$ for which $v_0^\gamma \partial_x f \in L^2(\bfR^3)$ we have
\begin{equation}
\label{Dxf}
 \left \Vert v_0^\gamma \partial_x f(t) \right \Vert_2^2  \leq C_T\left (1 + \left \Vert v_0^\gamma \partial_x f^0 \right \Vert_2^2 \right ) \leq C_T
 \end{equation}
 for all $t \in [0,T]$.
\end{proof}

\begin{proof}[Lemma \ref{L7}]
To begin, we estimate second derivatives of the density. These involve second derivatives of the fields, which must be estimated in $L^2$ rather than $L^\infty$.  As before, denote $$\mathcal{F}(t) =  \Vert E(t) \Vert_\infty + \Vert B(t) \Vert_\infty +\Vert \partial_x E(t) \Vert_\infty + \Vert \partial_x B(t) \Vert_\infty $$ and now let $$ \mathcal{G}(t) = \Vert \partial_{xx} E(t) \Vert_2^2 + \Vert \partial_{xx} B(t) \Vert_2^2.$$ We differentiate the Vlasov equation twice in $x$, multiply by $v_0^{2\gamma} \partial_{xx} f$ and integrate to yield

\begin{eqnarray*}
\frac{1}{2} \frac{d}{dt} \left \Vert v_0^\gamma \partial_{xx} f(t) \right \Vert_2^2 & = & - \iint \partial_x \left ( \frac{1}{2} \hat{v}_1  v_0^{2\gamma}  \vert \partial_{xx} f \vert^2 \right )  dv \ dx\\
& \ &    - \iint v_0^{2\gamma} \partial_{xx} f \nabla_v \cdot \left ( \partial_{xx} K f + 2\partial_x K \partial_x f + K \partial_{xx} f \right ) \ dx \ dv\\
& \ &     + \iint v_0^{2\gamma} \partial_{xx} f \nabla_v \cdot ( D \nabla_v \partial_{xx} f) \ dv \ dx\\
& = & \iint \left [ v_0^{2\gamma-1} ( 2\gamma \hat{v} \partial_{xx} f + v_0 \nabla_v \partial_{xx} f ) \cdot ( \partial_{xx} K f + \partial_x K \partial_x f) + \frac{1}{2}v_0^{2\gamma} K\cdot \nabla_v (\vert \partial_{xx} f \vert^2)  \right ] dx dv \\
& \ &    - \iint v_0^{2\gamma-1} \left ( 2\gamma \hat{v}  \partial_{xx} f + v_0  \nabla_v \partial_{xx} f \right ) \cdot ( D \nabla_v \partial_{xx} f) \ dv \ dx\\
& = & I + II
\end{eqnarray*}
As before, we have integrated by parts in $v$ and used the compact $x$-support of the particle distribution.  With this, bounds for $I$ follow as in Lemma \ref{L6} with the exception of terms involving $\partial_{xx} K$.  More specifically, we use Lemmas \ref{L2}, \ref{L5}, \ref{L6}, Cauchy-Schwarz, and Cauchy's inequality to find
\begin{eqnarray*}
I & \leq &C \mathcal{G}(t)^{1/2} \left [\int \left ( \int (v_0^{2\gamma-1} \vert \partial_{xx} f \vert + v_0^{2\gamma} \vert \nabla_v \partial_{xx} f \vert ) \ f \ dv \right )^2 dx \right ]^{1/2} \\
& \ &    + C\mathcal{F}(t) \left ( \Vert v_0^{\gamma -\frac{1}{2}} \partial_{xx} f(t) \Vert_2^2 + \Vert \partial_x f(t) \Vert_2^2 +  \eps \Vert v_0^{\gamma -\frac{1}{2}} \nabla_v \partial_{xx} f (t)\Vert_2^2 + \frac{1}{\eps}\Vert v_0^{\gamma+\frac{1}{2}} \partial_x f (t)\Vert_2^2 \right )\\
& \leq & C \mathcal{G}(t)^{1/2} \left [\int \left ( \int v_0^{2\gamma-1} \vert \partial_{xx} f \vert^2 dv \right ) \cdot \left ( \int v_0^{2\gamma-1} f^2 dv \right) dx + \int \left (\int v_0^{2\gamma-1} \vert \nabla_v \partial_{xx} f \vert^2 dv \right ) \cdot \left ( \int v_0^{2\gamma+1} f^2 dv \right) dx \right]^{1/2} \\
& \ &    + C_T \left ( 1 + \left \Vert v_0^{\gamma -\frac{1}{2}} \partial_{xx} f(t) \right \Vert_2^2 + \frac{\eps}{2} \left \Vert v_0^{\gamma -\frac{1}{2}} \nabla_v \partial_{xx} f(t) \right \Vert_2^2 \right ) \\
& \leq & C_T \left (1 + \left (1 + \frac{1}{\eps} \right )\mathcal{G}(t) + \norm{f^0}_\infty \norm{\int v_0^{2\gamma+1} f(t)}_\infty \left[ \left \Vert v_0^{\gamma -\frac{1}{2}} \partial_{xx} f(t) \right \Vert_2^2 + \eps \left \Vert v_0^{\gamma -\frac{1}{2}} \nabla_v \partial_{xx} f(t) \right \Vert_2^2 \right ]  \right )\\ 
& \leq & C_T \left (1 + \mathcal{G}(t) + \left \Vert v_0^{\gamma -\frac{1}{2}} \partial_{xx} f(t) \right \Vert_2^2 + \eps \left \Vert v_0^{\gamma -\frac{1}{2}} \nabla_v \partial_{xx} f(t) \right \Vert_2^2  \right ) 
\end{eqnarray*}
for $2\gamma < \a - 3$.
We estimate $II$ exactly as before to find 
$$ II \leq C \left \Vert v_0^{\gamma -\frac{1}{2}} \partial_{xx} f(t) \right \Vert_2^2 - \left \Vert v_0^{\gamma-\frac{1}{2}} \nabla_v \partial_{xx} f(t) \right \Vert_2^2.$$
Hence, combining $I$ and $II$, we find for $\eps$ small enough,
$$ \frac{d}{dt} \left \Vert v_0^\gamma \partial_{xx} f(t) \right \Vert_2^2  \leq C_T \left (1 + \mathcal{G}(t) + \left \Vert v_0^{\gamma -\frac{1}{2}} \partial_{xx} f(t) \right \Vert_2^2 \right ) \leq C_T \left (1 + \mathcal{G}(t) + \left \Vert v_0^\gamma \partial_{xx} f(t) \right \Vert_2^2 \right ).$$
By Gronwall's Lemma we have 
\begin{equation}
\label{Dxxf}
 \left \Vert v_0^\gamma \partial_{xx} f(t) \right \Vert_2^2  \leq C_T\left(1 + \mathcal{G}(t) \right )
 \end{equation}
 for all $t \in [0,T]$ and $\gamma < \min \left \{\frac{\a - 3}{2}, \b-1 \right \}$.


Before turning to field derivatives, we will need a way to relate the current density and its derivatives to that of the particle distribution.  So, for $k=0, 1, 2$ we estimate
\begin{equation}
\label{jf}
\Vert \partial^k_x j_2(t) \Vert_2^2 \leq \int \left ( \int  \vert \partial^k_x f \vert dv \right )^2 \ dx \leq \left ( \iint v_0^{2\gamma} \vert \partial^k_x f\vert^2 dv dx \right) \left ( \int v_0^{-2\gamma} dv \right ) \leq C\Vert v_0^\gamma \partial^k_x f (t)\Vert_2^2.
\end{equation}
for $\gamma > 1$.
Additionally, we will need to bound $\partial_t j_2$ in $L^2$, which can be done using (\ref{Dxf}).  Using the Vlasov equation and integrating by parts in $v$, we see
\begin{eqnarray*}
\partial_t j_2 & = &  - \int \hat{v}_1 \hat{v}_2 \partial_x f + \int \nabla_v (\hat{v}_2) \cdot K f \ dv + \int \nabla_v \cdot [D \nabla_v (\hat{v}_2)] f \ dv \\
& \leq & \int \vert \partial_x f \vert dv + \int (1 + \Vert K(t) \Vert_\infty) f \ dv
\end{eqnarray*}
Thus, it follows by Lemmas \ref{L2} and \ref{L5} that
\begin{equation}
\label{Dtj}
\Vert \partial_t j_2(t) \Vert_2^2 \leq C_T \left (1 + \Vert v_0^\gamma \partial_x f(t) \Vert_2^2 \right ) \leq C_T.
\end{equation}
for every $ t\in [0,T]$ where $\gamma > 1$.

Now, we estimate field derivatives.  Since $\partial_x E_1 = \rho$, we find for all $t \in [0,T]$
\begin{eqnarray*}
\Vert \partial_x E_1(t) \Vert_2^2 & \leq & C \int \left (\int f(t,x,v) \ dv \right )^2 dx + \Vert \phi \Vert_2^2\\
& \leq & C \left (1 + \Vert v_0^\gamma f(t) \Vert_2^2 \left ( \int v_0^{-2\gamma} \ dv \right ) \right )\\
& \leq & C_T
\end{eqnarray*}
by Lemma \ref{L5} where $\gamma > 1$ to bound the integral.
We estimate identically for $\partial_{xx} E_1$ and use $\phi \in C_c^1$ so that by Lemma \ref{L6} with $\gamma > 1$
$$ \Vert \partial_{xx} E_1(t) \Vert_2^2 \leq C \left (\Vert \phi' \Vert_2 + \Vert v_0^\gamma \partial_x f(t) \Vert_2^2 \left ( \int v_0^{-2\gamma} \ dv \right ) \right ) \leq C_T.$$

Using the transport equations of (\ref{RVMFP}) for $E_2$ and $B$, it follows that these quantities and their derivatives satisfy wave equations with derivatives of $j_2$ as source terms, namely
$$ \Box B = \partial_x j_2,    \quad \Box E_2 = -\partial_t j_2.$$
Using standard $L^2$ estimates for the wave equation, we multiply the first equation by $\partial_t B$ and integrate in $x$.  After integrating by parts and using Cauchy's inequality, this yields
$$\frac{d}{dt} \left ( \Vert \partial_t B(t)\Vert_2^2 +  \Vert \partial_x B(t) \Vert_2^2 \right ) \leq \Vert \partial_x j_2(t) \Vert_2^2 + \Vert \partial_t B(t) \Vert_2^2.$$
Using Lemma \ref{L6} with (\ref{jf}), this becomes
$$\frac{d}{dt} \left ( \Vert \partial_t B(t) \Vert_2^2 +  \Vert \partial_x B(t) \Vert_2^2 \right ) \leq C_T \left( 1+ \Vert \partial_t B(t) \Vert_2^2 \right )$$
which, by Gronwall's inequality, yields $$ \Vert \partial_t B(t) \Vert_2^2 +  \Vert \partial_x B(t) \Vert_2^2 \leq C_T.$$  Since $\partial_x E_2 = - \partial_t B$ and $\partial_x E_2 = - \partial_x B - j_2$, the same bounds hold for derivatives of $E_2$.

We may now proceed in a similar fashion for second derivatives of the field.  From the field equations, we see 
$$ \Box (\partial_x B) = \partial_{xx} j_2$$ and thus
$$\frac{d}{dt} \left ( \Vert \partial_{tx} B(t) \Vert_2^2 +  \Vert \partial_{xx} B(t) \Vert_2^2 \right ) \leq \Vert \partial_{xx} j_2(t) \Vert_2^2 + \Vert \partial_{tx} B(t) \Vert_2^2.$$
Since $\partial_{tx} B = -\partial_{xx} E_2$, this is equivalent to
$$\frac{d}{dt} \left ( \Vert \partial_{xx} E_2(t) \Vert_2^2 +  \Vert \partial_{xx} B(t) \Vert_2^2 \right ) \leq \Vert \partial_{xx} j_2(t) \Vert_2^2 + \Vert \partial_{xx} E_2(t) \Vert_2^2.$$ 
Using (\ref{Dxxf}) and (\ref{jf}), this implies $$\mathcal{G}'(t) \leq C_T(1 + \mathcal{G}(t) ),$$ and using Gronwall's inequality and the assumption on the initial fields, we find
$$ \Vert \partial_{xx} E_2(t) \Vert_2^2 +  \Vert \partial_{xx} B(t) \Vert_2^2  \leq C_T.$$
With this, (\ref{Dxxf}) provides an a priori bound on $\Vert v_0^\gamma \partial_{xx} f(t) \Vert_2^2$ for all $t \in [0,T]$.
Since $\Box B = -\partial_x j_2$, we see that $\partial_{tt} B = \partial_{xx}B - \partial_x j_2 \in L^\infty ([0,T]; L^2(\bfR))$ by (\ref{Dxf}) and (\ref{jf}).  Then, $\partial_{tx} E_2 = - \partial_{tt} B \in L^\infty ([0,T]; L^2(\bfR))$ and $\partial_{tx} B = - \partial_{xx} E_2 \in L^\infty ([0,T]; L^2(\bfR))$, and finally $\partial_{tt} E_2 = - \partial_{tx} B - \partial_t j_2 \in L^\infty ([0,T]; L^2(\bfR))$ by (\ref{Dtj}).
\end{proof}

\begin{proof}[Lemma \ref{dissipative}]
Throughout, we will use $v_0 \geq 1$ in order to increase moments of the estimates where necessary so as to match the results of the lemma. 
Additionally, we will use the notation $R^\gamma(v)$ to generically denote a function of $v$ such that 
$|R^\gamma(v)|\leq C_T v_0^\gamma$, but the specific value of $R^\gamma(v)$ may change from line to line.
We first estimate moments of the density.  Computing
\begin{eqnarray*}
\frac{1}{2}\frac{d}{dt} \Vert v_0^2 f(t) \Vert_2^2 & = & \iint v_0^4 f \left [-\hat{v}_1 \partial_x f - K \cdot \grad_v f + \grad_v \cdot (D \grad_v f ) \right ] \ dv dx\\
& = & I + II + III.
\end{eqnarray*}
The first term vanishes as it is a pure $x$-derivative.
For $II$, we integrate by parts and use the field bounds of Lemma \ref{L2}  so that
\begin{eqnarray*}
II & = &  - \iint v_0^4 \grad_v \cdot (K f^2) \ dv dx\\
& = & 4 \iint v_0^3 \hat{v} \cdot K f^2 \ dv dx  \\
& \leq & C_T \Vert v_0^{3/2} f(t) \Vert_2^2.
\end{eqnarray*}
To estimate $III$, we integrate by parts, then use the property $D\hat{v} = v$ and integrate by parts again in the first term.  Also, we use (\ref{D1}) in the second term to find
\begin{eqnarray*}
III & = &  - \iint \grad_v (v_0^4 f) \cdot D\grad_v  f \ dv dx\\
& = & - \iint (4v_0^3 \hat{v}f + v_0^4 \grad_v f)  \cdot D\grad_v  f \ dv dx\\
& \leq & C \Vert v_0^{3/2} f(t) \Vert_2^2 - \Vert v_0^2 D^{1/2} \grad_v f(t) \Vert_2^2.\\
& \leq & C \Vert v_0^{3/2} f(t) \Vert_2^2 - \Vert v_0^{3/2} \grad_v f(t) \Vert_2^2.
\end{eqnarray*}
Combining the estimates, the first inequality follows.

Next, we let $\partial_v$ be either first-order derivative and compute
\begin{eqnarray*}
\frac{1}{2}\frac{d}{dt} \Vert v_0^{3/2} \partial_v f(t) \Vert_2^2 & = & \iint v_0^3 \partial_v f \left [ - \hat{v}_1 \partial_v \partial_x f - \partial_v \hat{v}_1 \partial_x f \right. \\
 & \ & \ - \partial_v K \cdot \grad_v f - K \cdot \grad_v \partial_v f\\
 & \ & \left. \  + \grad_v \cdot ( (\partial_v  D) \grad_v f ) + \grad_v \cdot (D \grad_v  \partial_v  f ) \right ] \ dv dx\\
& = & I + II + III.
\end{eqnarray*}
The first term in $I$ vanishes as before and thus using Cauchy's inequality
\begin{eqnarray*}
I & = &  - \iint v_0^3 R^{-1}(v) \partial_v f \partial_x f \ dv dx\\
& \leq & C \left ( \Vert v_0 \partial_v f(t) \Vert_2^2 + \Vert v_0 \partial_x f(t) \Vert_2^2 \right).
\end{eqnarray*}
For $II$, we use the field bounds of Lemma \ref{L2} and integrate by parts in the second term to find
\begin{eqnarray*}
II & = &  - \iint v_0^3  \partial_v f [ R^{-1}(v) \grad_v f + K \cdot \grad_v \partial_v f] \ dv dx\\
& \leq & C_T \Vert v_0^{3/2} \grad_v f(t) \Vert_2^2.
\end{eqnarray*}
Finally, in $III$ we integrate by parts while using $D\hat{v} = v$ and boundedness of derivatives of $D$ to find
\begin{eqnarray*}
III & = &  - \iint \left [3v_0^2 \hat{v} \partial_v f + v_0^3 \grad_v \partial_v f \right ] [ \partial_v D \grad_v f + D \grad_v \partial_v f] \ dv dx\\
& = &  - \iint \left [3v_0^2 R^0(v) \partial_v f \grad_v f  + \frac{1}{2} v_0^3 R^0(v) \partial_v  \left \vert \grad_v f  \right \vert^2+\frac{3}{2}v_0^2 D\hat{v} \cdot  \grad_v \vert \partial_v f \vert^2 + v_0^3 \grad_v \partial_v f \cdot D \grad_v \partial_v f \right ] \ dv dx\\
& \leq & C \Vert v_0 \grad_v f(t) \Vert_2^2 - \Vert v_0 \grad_v \partial_v f(t) \Vert_2^2.
\end{eqnarray*}
We collect these estimates, use  $\Vert \partial_v f(t) \Vert_2^2 \leq \Vert \grad_v f(t) \Vert_2^2$, and then sum over first-order $v$-derivatives to arrive at an estimate on $\frac{d}{dt} \Vert v_0^{3/2} \grad_v f(t) \Vert_2^2$.  With this, the second result follows.

The final two results concern $x$-derivatives of the density, so we first compute
\begin{eqnarray*}
\frac{1}{2}\frac{d}{dt} \Vert v_0^{3/2} \partial_x f(t) \Vert_2^2 & = & \iint v_0^3 \partial_x f \left [-\hat{v}_1 \partial_{xx} f - \grad_v \cdot (\partial_x K f ) - K \cdot \grad_v \partial_x f + \grad_v \cdot (D \grad_v \partial_xf ) \right ] \ dv dx\\
& = & I + II + III + IV.
\end{eqnarray*}
As in the other estimates, $I$ vanishes.  For $II$, we integrate by parts and use the bounds on field derivatives provided by Lemma \ref{L4} and Cauchy's inequality to find
$$ II \leq C_T \left ( \Vert v_0^{3/2} \partial_x f(t) \Vert_2^2 +  \left (1+ \frac{1}{\eps} \right )  \Vert v_0^2 f(t) \Vert_2^2 + \eps \Vert v_0 \grad_v  \partial_x f(t) \Vert_2^2 \right ).$$
We note that for $\eps$ sufficiently small, the last term can be controlled by the final term arising in $IV$ below. Next, we integrate by parts in $III$ to find
$$ III = -\iint v_0^3 \grad_v \cdot (K | \partial_x f |^2 ) \ dv dx = 3\iint v_0^2 \hat{v} \cdot (K | \partial_x f |^2 ) \ dv dx \leq C_T \Vert v_0 \partial_x f(t) \Vert_2^2.$$
In the last term, we again integrate by parts and use $D\hat{v} = v$ along with (\ref{D1}) to find
\begin{eqnarray*}
IV & = & -\iint \left ( 3v_0^2 \hat{v} \partial_x f + v_0^3 \grad_v \partial_x f \right ) \cdot D \grad_v \partial_x f \ dv dx\\
& \leq & \Vert v_0 \partial_x f(t) \Vert_2^2 - \Vert v_0 \grad_v \partial_x f(t) \Vert_2^2.
\end{eqnarray*}
Combining the estimates, the third results follows.

To prove the last inequality, we let $\partial_v$ be either first-order derivative and compute
\begin{eqnarray*}
\frac{1}{2} \frac{d}{dt} \Vert v_0 \partial_v \partial_x f(t) \Vert_2^2 & = & \iint v_0^2 \partial_v \partial_x f 
\left [ - \hat{v}_1 \partial_v \partial_{xx} f - \partial_v \hat{v}_1 \partial_{xx} f \right. \\
 & \ & \ - \partial_v K \cdot \grad_v \partial_x f - \partial_x K \cdot \grad_v \partial_v f - \partial_v \partial_x K \cdot \grad_v f - K \cdot \grad_v \partial_v \partial_x f\\
 & \ & \left. \  + \grad_v \cdot ( (\partial_v  D) \grad_v \partial_x f ) + \grad_v \cdot (D \grad_v  \partial_v  \partial_x f ) \right ] \ dv dx\\
& = & I + II + III.
\end{eqnarray*}
Because the first term of $I$ vanishes yet again, we use Cauchy's inequality to find
$$I  = 
-\iint v_0^2 R^{-1}(v) \partial_v \partial_x f \partial_{xx} f \ dv dx 
\leq C_T \left ( \Vert v_0 \partial_v \partial_x f(t) \Vert_2^2 + \Vert \partial_{xx} f(t) \Vert_2^2 \right ).$$
To estimate $II$, we integrate by parts in the third and fourth terms below and use the bounds on fields and field derivatives (Lemmas \ref{L2} and \ref{L4}) as well as Cauchy's inequality so that
\begin{eqnarray*}
II & = & -\iint v_0^2 \partial_v \partial_x f \left [ \partial_v K \cdot \grad_v \partial_x f + \partial_x K \cdot \grad_v \partial_v f + \partial_v \partial_x K \cdot \grad_v f + K \cdot \grad_v \partial_v \partial_x f \right ] \ dv dx\\
& = &  -\iint v_0^2 R^{-1}(v) \partial_v \partial_x f  \left [ \grad_v \partial_x f  + \grad_v f \right ]\ dv dx - 
\iint v_0^2\partial_v \partial_x f   \grad_v \cdot \left ( \partial_x K \partial_v f \right )  \ dv dx\\
& \ & \  - \iint v_0^2 \grad_v \cdot \left (K \partial_v \partial_x f \right ) \ dv dx\\
& \leq & C_T \left ( \Vert v_0 \grad_v \partial_x f(t) \Vert_2^2 + \left (1 + \frac{1}{\eps} \right ) \Vert v_0 \grad_v f(t) \Vert_2^2 + \eps \Vert v_0 \grad_v \partial_v \partial_x f(t) \Vert_2^2 \right )
\end{eqnarray*}
We note that for $\eps$ sufficiently small, the last term can be controlled by the final term arising in $III$ below. 
Lastly, we estimate $III$ exactly as in the proof of the second inequality, but for $\partial_x f $ instead of $f$, to find
$$III  \leq C \Vert v_0 \grad_v \partial_x f(t) \Vert_2^2 - \Vert v_0 \grad_v \partial_v \partial_x f(t) \Vert_2^2.$$
With this, we combine the estimates, sum over all first-order $v$-derivatives, and proceed as for the second inequality, which yields the final estimate.
We note that throughout we have rescaled $\eps > 0$ by a factor of $C_T > 0$ when necessary.
\end{proof}

\begin{proof}[Lemma~\ref{v_deriv_lemma}]
For each result the proof is made more difficult because of the structure of $D$ and its derivatives, while in the case $D = \mathbb{I}$ derivatives commute with the Fokker-Planck operator and the computations are straightforward.  Let $k=2,3,4$ be given and $t \in (0,T)$.
As in the proof of the previous lemma, we will use the notation $R^\gamma(v)$ for a generic function satisfying
$|R^\gamma(v)|\leq C_T v_0^\gamma$.

Now, fix a multi-index $\alpha=(\alpha_1,\alpha_2)$ where we denote $\del_{v_1}^{\alpha_1}\del_{v_2}^{\alpha_2}$ by $\del_v^\alpha$, and consider
\begin{eqnarray*}
 \frac{1}{2}\frac{d}{dt}\norm{v_0^\gamma \del_v^\alpha f(t)}_2^2  & = &
   -\ip{v_0^\gamma \hat{v}_1\del_x \del_v^\alpha f}{v_0^\gamma \del_v^\alpha f}
   -\ip{v_0^\gamma K\cdot\grad_v \del_v^\alpha f}{v_0^\gamma \del_v^\alpha f}
   +\ip{v_0^\gamma\grad_v\cdot(D\grad_v \del_v^\alpha f)}{v_0^\gamma\del_v^\alpha f}\\
   & \ &   + \sum_{\substack{\beta+\alpha'=\alpha\\|\beta|>0}} {\alpha\choose\alpha'~\beta}\Big[
    \ip{R^{1-|\beta|+\gamma}(v)\del_x\del_v^{\alpha'}f}{v_0^\gamma \del_v^\alpha f}\\
    & \ &    +\ip{BR^{1-|\beta|+\gamma}(v)\del_{v_1}\del_v^{\alpha'}f}{v_0^\gamma \del_v^\alpha f}+\ip{BR^{1-|\beta|+\gamma}(v)\del_{v_2}\del_v^{\alpha'}f}{v_0^\gamma \del_v^\alpha f}\\
   & \ & +\ip{v_0^\gamma\grad_v\cdot(\del_v^{\beta}(D)\grad_v\del_v^{\alpha'}f)}{v_0^\gamma \del_v^\alpha f}
   \Big]\\
& =: &I+II+III+\sum_{\substack{\beta+\alpha'=\alpha\\|\beta|>0}}{\alpha\choose\alpha'~\beta} \Big[IV^1_{\alpha\beta}+IV^2_{\alpha\beta}+IV^3_{\alpha\beta}+IV^4_{\alpha\beta}\Big]
\end{eqnarray*}

For $I$, we integrate by parts in $x$ so that
$\ip{v_0^\gamma \hat{v}_1\del_x \del_v^\alpha f}{v_0^\gamma \del_v^\alpha f} 
  = -\ip{v_0^\gamma \del_v^\alpha f}{v_0^\gamma \hat{v}_1 \del_x \del_v^\alpha f}$ 
and hence the first term vanishes.
For $II$ we integrate by parts in $v$ to find
$$-2\ip{v_0^\gamma K\cdot\grad_v \del_v^\alpha f}{v_0^\gamma \del_v^\alpha
  f}=\ip{(\grad_v v_0^{2\gamma})\cdot K \del_v^\alpha f}{\del_v^\alpha
  f}+\ip{v_0^{\gamma}(\grad_v \cdot K) \del_v^\alpha f}{v_0^\gamma\del_v^\alpha f}$$
The second term vanishes by the divergence-free structure of $K$, while the first term
is bounded by field estimates so that
$$II \leq C\left (\norm{B((t)}_\infty+\norm{E(t)}_\infty \right )\norm{v_0^{\gamma}\del_v^\alpha f(t)}_2^2.$$
To estimate $III$, we integrate by parts in $v$ to find
$$\ip{v_0^\gamma\grad_v\cdot(D\grad_v \del_v^\alpha
  f)}{v_0^\gamma\del_v^\alpha f}=-\norm{v_0^\gamma D^{1/2}\grad_v\del_v^\alpha
  f(t)}_2^2-\ip{\grad_v(v_0^{2\gamma})\cdot D\grad_v\del_v^\alpha
  f}{\del_v^\alpha f}.$$
Integrating by parts again in the second of these two terms yields
$\ip{R^{\gamma-1}(v)\del_v^\alpha f}{v_0^\gamma\del_v^\alpha f}$.
So we have
$$III \leq -\norm{v_0^{\gamma-1/2}\grad_v\del_v^\alpha f(t)}_2^2 + C\norm{v_0^\gamma \del_v^\alpha f(t)}_2^2$$

Next, we estimate the terms $IV_{\alpha\beta}^1$.
If $|\alpha'|=0$ then we may use Cauchy's inequality and hence
$$IV_{\alpha\beta}^1 \leq \norm{v_0^{1-k+\gamma}\del_xf(t)}_2^2+\norm{v_0^\gamma \del_v^\alpha f(t)}_2^2.$$  
Otherwise we may write $\del_v^{\alpha'}=\del_{v_i}\del_v^{\alpha''}$ with $\vert \alpha'' \vert = \vert \alpha \vert - 2$.
Then, we integrate by parts in $v_i$ and write this term as
$$\ip{R^{1-|\beta|+\gamma}(v)\del_x\del_v^{\alpha'}f}{v_0^\gamma\del_v^\alpha f} = 
-\ip{R^{-|\beta|+\gamma}(v)\del_x\del_v^{\alpha''}f}{v_0^\gamma \del_v^\alpha f}-\ip{R^{1-|\beta|+\gamma}(v)\del_x\del_v^{\alpha''}f}{v_0^\gamma \del_{v_i}\del_v^\alpha f}$$
Applying Cauchy's inequality with $\eps > 0$ to both terms we arrive at
$$IV_{\alpha\beta}^1 \leq \frac{C}{\eps}\norm{v_0^{\gamma+1/2}\del_v^{\alpha''}\del_xf(t)}_2^2+\norm{v_0^\gamma\del_v^\alpha f(t)}_2^2
+\eps\norm{v_0^{\gamma-1/2}\del_{v_i}\del_v^\alpha f(t)}_2^2$$
and we can choose $\eps$ small enough so that the last term here is
absorbed by the first term in the estimate of $III$.
Both $IV_{\alpha\beta}^2$ and $IV_{\alpha\beta}^3$ possess the form
$\ip{BR^{1-|\beta|+\gamma}(v)\del_{v_j}\del_v^{\alpha'}f}{v_0^\gamma \del_v^\alpha f}.$
Hence, after applying Cauchy's inequality we find
\begin{align}
IV_{\alpha\beta}^2+IV_{\alpha\beta}^3&\leq
\norm{B(t)}_\infty\left(\norm{R^{1-|\beta|+\gamma}(v)\del_{v_j}\del_v^{\alpha'}f(t)}_2^2+\norm{v_0^\gamma\del_v^\alpha f(t)}_2^2\right)\\
&\leq
\norm{B(t)}_\infty\left(\norm{v_0^\gamma\del_v^\alpha f(t)}_2^2+\sum_{1 \leq |\alpha|<k}\norm{v_0^{\gamma+|\alpha|-k}\del_v^\alpha f(t)}_2^2 \right)
\end{align}

To estimate $IV_{\alpha\beta}^4$ we must consider cases.
If $|\alpha'|<|\alpha|-1$, we use Cauchy's inequality to find
\begin{eqnarray*}
IV_{\alpha\beta}^4 & \leq & C \Big(\norm{v_0^{\gamma-|\beta|}\del_{v_i}\del_v^{\alpha'}f(t)}_2^2+\norm{v_0^{\gamma+1-|\beta|}\del_{v_i}\del_{v_j}\del_v^{\alpha'} f(t)}_2^2+\norm{v_0^\gamma\del_v^\alpha f(t)}_2^2\Big)\\ & \leq & C\left(\norm{v_0^\gamma\del_v^\alpha f(t)}_2^2+\sum_{1 \leq |\alpha|<k}\norm{v_0^{\gamma+|\alpha|-k}\del_v^\alpha f(t)}_2^2\right)
\end{eqnarray*}
If $|\alpha'|=|\alpha|-1$, suppose $\del_{v_i}\del_v^{\alpha'}=\del_v^\alpha$. Then the terms involving $\del_{v_i}\del_v^{\alpha'}$ can be handled using Cauchy-Schwarz.
The terms involving $\del_{v_iv_j}^2\del_v^{\alpha'}f$, after integration by parts, are bounded by $\norm{v_0^\gamma\del_v^\alpha f(t)}_2^2$.  


Collecting the estimates, summing over all $\alpha$ with $| \alpha | = k$, and writing 
$$ \Vert v_0^\gamma \grad_v^k f(t) \Vert_2^2 = \sum_{|\alpha|=k} \norm{v_0^\gamma \del^\alpha f(t)}_2^2$$ we find
\begin{eqnarray*}
 \Vert v_0^\gamma \grad_v^k f(t) \Vert_2^2 & \leq & C \left (  \Vert v_0^\gamma \grad_v^k f(t) \Vert_2^2 + \sum_{j=1}^{k-1} \norm{v_0^{\gamma+j-k}\grad_v^j f(t)}_2^2
 + \norm{v_0^{\gamma+1/2} \del_x \grad_v^{k+2}f(t)}_2^2  \right )\\
& \ & \  - (1-\eps) \norm{v_0^{\gamma-1/2}\grad_v^{k+1} f(t)}_2^2
\end{eqnarray*}
which proves the first result.

Next, we turn to the second result.  Let $\partial_v^2$ be any second-order $v$-derivative.  We compute
\begin{eqnarray*}
\frac{1}{2}\frac{d}{dt} \Vert v_0^{1/2} \partial_v^2 \partial_x f(t) \Vert_2^2 & = & \iint v_0 \partial_v^2 \partial_x f \left [
- \partial_v^2 \left (\hat{v}_1 \partial_{xx} f \right )
- \partial_v^2 \partial_x \left (K \cdot \grad_v f \right )
+ \partial_v^2 \left (\grad_v \cdot (D \grad_v \partial_xf ) \right ) \right ] \ dv dx\\
& = & I + II + III.
\end{eqnarray*}
As usual, one of the terms in $I$ vanishes. So, we integrate by parts in the latter term below and use Cauchy's inequality with $\eps > 0$ to find
\begin{eqnarray*}
I & = &  - \iint v_0  \partial_v^2 \partial_x f \left [ R^{-2}(v) \partial_{xx} f + 2R^{-1}(v) \partial_v \partial_{xx} f\right ] \ dv dx\\
& \leq & C \left ( \Vert \partial^2_v \partial_xf(t) \Vert_2^2 +  \left (1 + \frac{1}{\eps} \right ) \Vert \partial_{xx} f(t) \Vert_2^2 + \eps \Vert \partial^3_v \partial_x f(t) \Vert_2^2 \right).
\end{eqnarray*}
We note that for $\eps$ sufficiently small, the last term can be controlled by the final term arising in $III$ below. To estimate $II$, we integrate by parts in the third and last terms below, use the control of field and field derivative terms guaranteed by Lemmas \ref{L2} and \ref{L4}, and utilize Cauchy's inequality so that
\begin{eqnarray*}
II & = &  - \iint v_0  \partial_v^2 \partial_x f \left [ R^{-2}(v) \grad_v f + 2R^{-1}(v) \grad_v \partial_v  f + R^0(v) \grad_v \partial^2_v  f \right. \\
& \ & +  \left.  R^{-2}(v) \grad_v \partial_x f + 2R^{-1}(v) \grad_v \partial_v \partial_x  f + K \cdot \grad_v \partial^2_v \partial_x   f  \right ] \ dv dx\\
& \leq & C_T \biggl( \Vert v_0^{1/2} \grad^2_v \partial_xf(t) \Vert_2^2 +  \Vert \nabla_v f(t) \Vert_2^2  + \Vert \grad_v \partial_x f(t) \Vert_2^2\\
& \ & + \left.  \left( 1 + \frac{1}{\eps} \right ) \Vert v_0 \nabla_v \partial_v f(t) \Vert_2^2 + \eps \Vert \nabla_v \partial^2_v \partial_x f(t) \Vert_2^2  \right).
\end{eqnarray*}
Again, for $\eps$ sufficiently small, the last term can be controlled by the final term arising in $III$ below.
We integrate by parts, then use aforementioned properties of $D$ and Cauchy's inequality with $\eps > 0$ in $III$ to find
\begin{eqnarray*}
III & = &  - \iint \left [ \hat{v} \partial^2_v \partial_x f + v_0 \grad_v \partial^2_v \partial_x f \right ] [  \partial^2_v D \grad_v \partial_x f  + 2\partial_v D \grad_v \partial_v \partial_x f + D \grad_v \partial^2_v \partial_x f] \ dv dx\\
& = &  - \iint \left [ \hat{v} \partial^2_v \partial_x f + v_0 \grad_v \partial^2_v \partial_x f \right ] [  R^{-1}(v) \grad_v \partial_x f  + R^0(v) \grad_v \partial_v \partial_x f + D \grad_v \partial^2_v \partial_x f] \ dv dx\\
& \leq & C \left ( \Vert \partial^2_v \partial_x f(t) \Vert_2^2 + \left ( 1 + \frac{1}{\eps} \right ) \Vert \grad_v \partial_x f(t) \Vert_2^2 + \Vert \grad_v \partial_v \partial_x f(t) \Vert_2^2 \right ) - (1- \eps) \Vert \grad_v \partial^2 _v \partial_x f(t) \Vert_2^2.
\end{eqnarray*}
Finally, we collect these estimates, so that 
\begin{eqnarray*}
I + II + III & \leq & C_T\left ( \Vert \partial^2_v \partial_xf(t) \Vert_2^2 +  \Vert \partial_{xx} f(t) \Vert_2^2 +
\Vert v_0^{1/2} \grad^2_v \partial_xf(t) \Vert_2^2 +  \Vert \nabla_v f(t) \Vert_2^2 +  \Vert \nabla_v \partial_v f(t) \Vert_2^2 \right.\\
& \ & \ \left.  + \Vert \grad_v \partial_x f(t) \Vert_2^2 +
\Vert \partial^2_v \partial_x f(t) \Vert_2^2 \right)
- (1- C_T\eps) \Vert \grad_v \partial^2 _v \partial_x f(t) \Vert_2^2.
 \end{eqnarray*}
Then, we use  $\Vert \partial^2_v \partial_x f(t) \Vert_2^2 \leq \Vert \grad^2_v  \partial_xf(t) \Vert_2^2$, sum over all $v$-derivatives to arrive at an estimate on $\frac{d}{dt} \Vert v_0^{1/2} \grad^2_v \partial_x f(t) \Vert_2^2$, and the claim then follows.
As for Lemma \ref{dissipative} we have rescaled $\eps > 0$ by a factor of $C_T > 0$ where necessary.
\end{proof}

%

\begin{proof} [Lemma \ref{L8}]
We will prove the result in a hierarchical fashion by building pairs of consecutive terms and adding higher-order derivatives as we go. 
To begin the proof, we consider $t \in (0,T)$ and define
$$M_1(t) = \Vert v_0^2 f(t) \Vert_2^2 + \frac{1}{2}t \Vert v_0^{3/2} \grad_v f(t) \Vert_2^2
+ \frac{1}{8} t^2 \Vert v_0 \grad^2_v f(t) \Vert_2^2$$
and differentiate to find
\begin{eqnarray*}
M_1'(t) & = & \frac{d}{dt} \Vert v_0^2 f(t) \Vert_2^2 + \frac{1}{2}t  \frac{d}{dt}  \Vert v_0^{3/2} \grad_v f(t) \Vert_2^2
+ \frac{1}{8} t^2  \frac{d}{dt}  \Vert v_0 \grad^2_v f(t) \Vert_2^2\\
& \ & \ + \frac{1}{2} \Vert v_0^{3/2} \grad_v f(t) \Vert_2^2
+ \frac{1}{4} t \Vert v_0 \grad^2_v f(t) \Vert_2^2
\end{eqnarray*}
Using Lemma \ref{dissipative}, we find
$$\frac{d}{dt} \Vert v_0^2 f(t) \Vert_2^2 \leq C_T \Vert v_0^2 f(t) \Vert_2^2  - \Vert v_0^{3/2} \grad_v f(t) \Vert_2^2$$
and
$$\frac{d}{dt} \Vert v_0^{3/2} \grad_v f(t) \Vert_2^2 \leq C_T  \left( \Vert v_0^{3/2} \grad_v f(t) \Vert_2^2 
+ \Vert v_0 \partial_x f(t) \Vert_2^2 \right )  - \Vert v_0 \grad^2_v f(t) \Vert_2^2.$$
Additionally, applying the first result of Lemma \ref{v_deriv_lemma} for $\gamma = 1$, $k=2$ we find
for any $\eps > 0$ sufficiently small
$$\frac{d}{dt} \Vert v_0 \grad^2_v f(t) \Vert_2^2 \leq C_T  \left( \Vert v_0 \grad^2_v f(t) \Vert_2^2
+ \Vert v_0^{3/2} \grad_v f(t) \Vert_2^2 
+ \Vert v_0^{3/2} \partial_x f(t) \Vert_2^2 \right )  - (1-\eps) \Vert v_0^{1/2} \grad^3_v f(t) \Vert_2^2.$$
We combine these results, use the bounds on $x$-derivatives of the particle distribution (Lemma \ref{L6}), and choose $\eps < 1$ to find
\begin{eqnarray*}
M_1'(t) & \leq & C_T  \left ( M_1(t) + \Vert v_0^{3/2} \partial_x f(t) \Vert_2^2 \right ) - \frac{1}{2} \Vert v_0^{3/2} \grad_v f(t) \Vert_2^2\\
& \ & \ - \frac{1}{4}t \Vert v_0 \grad^2_v f(t) \Vert_2^2
- \frac{(1-\eps)}{8} t^2 \Vert v_0^{1/2} \grad^3_v f(t) \Vert_2^2\\
& \leq & C_T  \left ( 1+  M_1(t) \right )
\end{eqnarray*}
Thus, by Gronwall's inequality, we conclude 
$$M_1(t) \leq C_T M_1(0) = C_T \Vert v_0^2 f^0 \Vert_2^2$$
Hence, for $t \in (0,T)$
\begin{equation}
\label{M1a}
\Vert v_0^{3/2} \grad_v f(t) \Vert_2^2 \leq \frac{C_T}{t}
\end{equation}
and
\begin{equation}
\label{M1b}
\Vert v_0 \grad^2_v f(t) \Vert_2^2 \leq \frac{C_T}{t^2}.
\end{equation}

Next, define
$$M_2(t) = \Vert v_0^{3/2} \partial_x f(t) \Vert_2^2 + \frac{1}{2}t \Vert v_0 \grad_v \partial_x f(t) \Vert_2^2$$
and differentiate to find

$$M_2'(t) = \frac{d}{dt} \Vert v_0^{3/2} \partial_x f(t) \Vert_2^2+ \frac{1}{2}t  \frac{d}{dt}  \Vert v_0 \grad_v \partial_x f(t) \Vert_2^2
 + \frac{1}{2}  \Vert v_0 \grad_v \partial_x f(t) \Vert_2^2$$
Using Lemma \ref{dissipative}, we find
$$\frac{d}{dt} \Vert v_0^{3/2} \partial_x f(t) \Vert_2^2 \leq C_T \left (\Vert v_0^{3/2} \partial_x f(t) \Vert_2^2  + \Vert v_0^2 f(t) \Vert_2^2 \right )-  (1-\eps) \Vert v_0 \grad_v \partial_x f(t) \Vert_2^2$$
and 
$$ \frac{d}{dt} \Vert v_0 \grad_v \partial_x f(t) \Vert_2^2  \leq C_T  \left( \Vert v_0 \grad_v \partial_x f(t) \Vert_2^2 
+ \Vert \partial_{xx} f(t) \Vert_2^2  + \Vert v_0^{3/2} \grad_v f(t) \Vert_2^2  \right ) - (1-\eps) \Vert v_0^{1/2} \grad^2_v \partial_x f(t) \Vert_2^2.$$
Combining these results while using the $L^2$-bounds on second $x$-derivatives of the density (Lemma \ref{L7}) and (\ref{M1a}), we find
\begin{eqnarray*}
M_2'(t) & \leq & C_T  \left ( M_2(t) + \Vert v_0^2 f(t) \Vert_2^2
+ t \Vert v_0^{3/2} \grad_v f(t) \Vert_2^2
+ \Vert \partial_{xx} f(t) \Vert_2^2 \right )\\
& \ & \ - \left (\frac{1}{2}-\eps \right ) \Vert v_0 \grad_v \partial_x f(t) \Vert_2^2
 - \frac{1}{2}\left (1-\eps \right ) t \Vert v_0^{1/2} \grad^2_v \partial_x f(t) \Vert_2^2\\
& \leq & C_T  \left ( 1+  M_2(t) \right )
\end{eqnarray*}
Thus, for $\eps < 1/2$, we use Gronwall's inequality to conclude 
$$M_2(t) \leq C_T M_2(0) = C_T \Vert v_0^{3/2} \partial_x f^0 \Vert_2^2$$
Hence, for $t \in (0,T)$
\begin{equation}
\label{M2}
\Vert v_0 \grad_v \partial_x f(t) \Vert_2^2 \leq \frac{C_T}{t}.
\end{equation}

Building onto previous terms, we next define
$$M_3(t ) = M_1(t) + \frac{1}{48} t^3 \Vert v_0^{1/2} \grad_v^3 f(t) \Vert_2^2.$$
Hence, using the estimate of $M_1'(t)$ we find
\begin{eqnarray*}
M_3'(t) & =& M_1'(t) + \frac{1}{16}t^2\Vert v_0^{1/2} \grad_v^3 f(t) \Vert_2^2
+ \frac{1}{48} t^3 \frac{d}{dt} \Vert v_0^{1/2} \grad_v^3 f(t) \Vert_2^2\\
& \leq & C_T ( 1 + M_1(t) ) + \left ( \frac{1}{16} - \frac{(1-\eps)}{8} \right ) t^2 \Vert v_0^{1/2} \grad_v^3 f(t) \Vert_2^2
+ \frac{1}{48} t^3 \frac{d}{dt} \Vert v_0^{1/2} \grad_v^3 f(t) \Vert_2^2\\
& \leq & C_T ( 1 + M_1(t) ) + \frac{1}{48} t^3 \frac{d}{dt} \Vert v_0^{1/2} \grad_v^3 f(t) \Vert_2^2
\end{eqnarray*}
for $\eps < 1/2$.
By the first result of Lemma \ref{v_deriv_lemma} with $\gamma = 1/2$ and $k=3$, we find for any $\eps > 0$ sufficiently small
$$\begin{aligned}
\frac{d}{dt} \Vert v_0^{1/2} \grad_v^3 f(t) \Vert_2^2 & \leq 
C_T \left ( \Vert v_0^{1/2} \grad_v^3 f(t) \Vert_2^2 + \Vert v_0^{3/2} \grad_v f(t) \Vert_2^2
+ \Vert v_0 \grad_v^2 f(t) \Vert_2^2  + \Vert v_0 \grad_v \partial_x f(t) \Vert_2^2  \right )\\
& \ - (1-\eps) \Vert  \grad_v^4 f(t) \Vert_2^2 
\end{aligned}$$
Therefore, using the previous bounds obtained from (\ref{M1a}), (\ref{M1b}), and (\ref{M2}), we have
$$M_3'(t) \leq C_T  \left (1 +  M_3(t) \right) - \frac{1}{48}(1-\eps)t^3 \Vert \grad_v^4 f(t) \Vert_2^2.$$
Since $\eps < 1/2$ Gronwall's inequality implies
\begin{equation}
\label{M3}
\Vert v_0^{1/2} \grad_v^3 f(t) \Vert_2^2 \leq \frac{C_T}{t^3}
\end{equation}
for $t \in (0,T)$.

Again building onto previous terms, we next define
$$M_4(t) = M_2(t) + \frac{1}{8}t^2 \Vert v_0^{1/2} \grad^2_v \partial_x f(t) \Vert_2^2$$
so that
$$M_4'(t) = M_2'(t) +   \frac{1}{4}t \Vert v_0^{1/2} \grad^2_v \partial_x f(t) \Vert_2^2 +  \frac{1}{8}t^2 \frac{d}{dt} \Vert v_0^{1/2} \grad^2_v \partial_x f(t) \Vert_2^2$$
Using the second result of Lemma \ref{v_deriv_lemma} along with the bound on $\Vert \partial_{xx}f(t) \Vert_2^2$ from Lemma \ref{L7} and the previous bounds obtained from (\ref{M1a}), (\ref{M1b}), and (\ref{M2}), we find for $\eps > 0$ sufficiently small
\begin{eqnarray*}
\frac{d}{dt} \Vert v_0^{1/2} \grad_v^2 \partial_x f(t) \Vert_2^2 & \leq & C_T \biggl(\Vert v_0^{1/2} \grad_v^2 \partial_x f(t) \Vert_2^2
+ \Vert v_0 \grad_v \partial_x f(t) \Vert_2^2
+  \sum_{j=1}^2 \norm{v_0^{\frac{4-j}{2}} \grad_v^j f (t)}_2^2 \\
& \ & \ + \Vert \partial_{xx} f(t) \Vert_2^2  \biggr)- (1- \eps) \norm{\grad_v^3 \partial_x f (t)}_2^2 \\
&\leq & C_T \Big(\frac{1}{t^2} + \Vert v_0^{1/2} \grad_v^2 \partial_x f(t) \Vert_2^2 \Big).
\end{eqnarray*}
Hence, we incorporate this and use the estimate of $M_2'(t)$ to find
\begin{eqnarray*}
M_4'(t) & \leq & C_T (1 + M_4(t) ) - \left (\frac{1}{4} - \frac{1 -\eps}{2} \right ) t \Vert v_0^{1/2} \grad^2_v \partial_x f(t) \Vert_2^2 \\
& \leq & C_T (1 + M_4(t) ) 
\end{eqnarray*}
and upon choosing $\eps < 1/2$ an application of Gronwall's inequality yields the bound
\begin{equation}
\label{M4}
\Vert v_0^{1/2} \grad^2_v \partial_x f(t) \Vert_2^2 \leq \frac{C_T}{t^2}
\end{equation}
for $t \in (0,T)$.
Finally, to obtain bounds on fourth-order $v$-derivatives of the density, we define
$$M_5(t) = M_3(t) + \frac{1}{2^4 4!} t^4 \Vert \grad^4_v f(t) \Vert_2^2$$
so that
$$M_5'(t) = M_3'(t) + \frac{1}{96} t^3 \Vert \grad^4_v f(t) \Vert_2^2 +  \frac{1}{2^4 4!} t^4 \frac{d}{dt}\Vert \grad^4_v f(t) \Vert_2^2.$$ 
Using Lemma \ref{v_deriv_lemma} one final time with $\gamma = 0$ and $k=4$ and utilizing the bounds obtained from (\ref{M1a})-(\ref{M4}), we find
$$ \frac{d}{dt}\Vert \grad^4_v f(t) \Vert_2^2 \leq C_T \left ( \frac{1}{t^3} + \Vert \grad^4_v f(t) \Vert_2^2 \right ).$$
Applying this to $M_5(t)$ and using the estimate of $M_3'(t)$, we see
$$M_5'(t) \leq C_T(1 + M_5(t)) + \left ( \frac{1}{96} - \frac{(1-\eps)}{48} \right ) t^3 \Vert \grad_v^4 f(t) \Vert_2^2.$$
and choosing $\eps < 1/2$ this implies 
$$\Vert \grad^4_v f(t) \Vert_2^2 \leq \frac{C_T}{t^4}$$
for $t \in (0,T)$.
Lastly, combining the estimates above, the proof of the lemma is complete.

We remark that this same argument can be applied to the eight term power series expansion
$$\sum_{k=0}^4 \frac{t^k}{2^k k!} \norm{v_0^{(4-k)/2} \grad_v^k f(t)}_2^2 + \sum_{k=0}^2 \frac{t^k}{2^kk!} \norm{v_0^{(3-k)/2} \grad_v^k \partial_x f(t)}_2^2$$
in order to arrive at an identical result.
However, the above argument is perhaps clearer.  Also, estimates of higher derivatives can be obtained if one imposes additional spatial regularity on the density and field terms, as this requires control of second-order field derivatives in $L^\infty$.  
\end{proof}

\section{Proof of Theorem \ref{T1}}
To conclude the paper, we utilize the previous lemmas to sketch the proof of Theorem \ref{T1}.

\begin{proof}
As is typical, the proof utilizes a standard iterative argument.  We define a sequence of solutions to the corresponding linear equations and show that it must converge to a solution of the nonlinear system (\ref{RVMFP}).  Define the initial iterates in terms of the given initial data
$$\begin{gathered}
f^0(t,x,v) =  f^0(x,v),\\
E_2^0(t,x) = E_2^0(x)\\
B^0(t,x) =  B^0(x).
\end{gathered}$$
Additionally, for every $n \in \bfN$, given $E_1^n, E_2^n, B \in L^\infty([0,\infty); H^2(\bfR))$ we obtain $f^n \in L^\infty([0,\infty) \times \bfR^3)$ by solving the linear initial value problems
\begin{equation}
\label{fnIC}
\left\{
\begin{gathered}
\partial_t f^n + \hat{v}_1 \partial_x f^n + K^{n-1}\cdot \nabla_v f^n = \nabla_v \cdot ( D \nabla_v f^n) \\
f^n(0,x,v) =  f^0(x,v),
\end{gathered}
\right.
\end{equation}
where $$ K^n = \langle E^n_1 + \hat{v}_2 B^n, E^n_2 - \hat{v}_1 B^n \rangle$$
and the fields satisfy
\begin{equation}
\label{FieldnIC}
\left\{
\begin{gathered}
\partial_t E^n_2 +\partial_x B^n =  - j_2, \quad \partial_t B^n + \partial_x E^n_2 = 0\\ 
E^n_1 = \int_{-\infty}^x \left ( \int  f^n(t,y,v) \ dv - \phi(y) \right ) \ dy\\
E_2^n(0,x) = E_2(0,x)\\
B^n(0,x) = B(0,x)
\end{gathered}
\right.
\end{equation}
respectively.  
Let $T > 0$ be given and $(f^n, E^n, B^n)$ be a sequence of weak solutions to the above linear system on $[0,T]$.  Using the assumptions on initial data, we apply the estimates of Section $2$ and find $E_2^n$ and $B^n$ converge (up to a subsequence) weakly in $L^\infty( [0,T]; H^1(\bfR))$ to functions $E_2$ and $B$, respectively.  Then, we proceed by estimating successive differences of iterates (e.g., see \cite{Lai}). First, we use (\ref{E2B}) and the linearity of the transport equation to find
$$ \Vert K^{n}(t) - K^{n-1}(t) \Vert_\infty \leq Ct \sup_{s\in [0,t]} \Vert v_0^a f^n(s) -v_0^a  f^{n-1}(s) \Vert_\infty.$$
Next, we write the Vlasov equation for the difference of consecutive iterates and use (\ref{V}) and Lemmas \ref{L1}, \ref{L2}, and \ref{L3} to conclude
$$\Vert v_0^a f^{n+1}(t) -  v_0^a f^n(t)  \Vert_\infty \leq C_T \int_0^t \left ( \Vert K^{n}(s) - K^{n-1}(s) \Vert_\infty  + \Vert v_0^a f^{n+1}(s) - v_0^a  f^n(s) \Vert_\infty \right ) \ ds$$
and thus
\begin{equation}
\label{successive}
\Vert v_0^a f^{n+1}(t) - v_0^a  f^n(t)  \Vert_\infty \leq C_T \int_0^t \sup_{\tau \in [0,s] }\Vert v_0^a f^{n}(\tau) -  v_0^a f^{n-1}(\tau)  \Vert_\infty \ ds.
\end{equation}
It follows from this estimate that $f^n$ converges strongly to a function $f$ in $L^\infty ( [0,T] \times \bfR^3)$.  Similar estimates can be used to show $f \in L^\infty([0,T]; L^1(\bfR^3))$ as in \cite{Degond}. It can then be shown that these limiting functions satisfy (\ref{RVMFP}) in the weak sense. Applying the regularizing estimates, we find $ f \in L^\infty ( (0,T); H_x^2(\bfR; H_v^4(\bfR^2)) )$.  By the Sobolev Embedding Theorem, $H^2(\bfR) \subset C_b^1(\bfR)$ and $H^4(\bfR^2) \subset C_b^2(\bfR^2)$.  Thus we find $f$, $E_2$, and $B$ possess a continuous partial derivative in $x$, and $f$ possesses two continuous partial derivatives in either $v$ component.  Using the Vlasov and transport equations, we see that $\partial_t B$, $\partial_t E_2$, and $\partial_t f$ are all continuous.  Hence, we find $f \in C^1( (0,T) \times \bfR; C^2(\bfR^2))$ and $E_2, B \in C^1( (0,T) \times \bfR)$.  Finally, from the regularity of $f$ we deduce $E_1 \in C^1((0,T) \times \bfR)$ as well.   Of course, with this additional regularity we conclude that the triple $(f,E_2,B)$ is, in fact, a classical solution of (\ref{RVMFP}).

The uniqueness of solutions follows from another standard argument.  We consider the difference of solutions $$h(t,x,v) = v_0^a (f_1(t,x,v) - f_2(t,x,v))$$
where $f_1$ and $f_2$ are any two solutions of (\ref{RVMFP}) which share the same initial data, and 
we derive the same estimate (\ref{successive}) for $h$, namely
$$ \Vert h(t) \Vert_\infty \leq C_T \int_0^t \sup_{\tau \in [0,s] }\Vert h(\tau)  \Vert_\infty \ ds.$$
After an application of Gronwall's inequality, it follows that $f_1 \equiv f_2$ and solutions are unique.

From the proof of this theorem and the previous lemmas, additional classical regularity of solutions can be obtained by imposing that further spatial derivatives of the initial data $f^0$, $E_2^0$, and $B^0$ belong to $L^2(\bfR^3)$. 
\end{proof}

\bibliographystyle{acm}
\bibliography{SDPrefs}

\end{document}